\title{Formulas and Upper Bounds for the Carath{\'e}odory Number of Hamming Graphs}
\author[1,2]{Ezequiel Dratman\thanks{\texttt{edratman@campus.ungs.edu.ar}}}
\author[1]{Lucía M. González\thanks{\texttt{lgonzalez@campus.ungs.edu.ar}}}
\author[1,2]{Luciano N. Grippo\thanks{\texttt{lgrippo@campus.ungs.edu.ar}}}
\affil[1]{Universidad Nacional de General Sarmiento. Instituto de Ciencias; Argentina.}
\affil[2]{UNGS, CONICET, ICI, Prov. de Buenos Aires, Argentina.}
\newtheorem{theorem}{Theorem}[section]
\newtheorem{coro}{Corollary}[section]
\newtheorem{lem}{Lemma}[section]
\newtheorem{claim}{Claim}[section]
\newtheorem{prop}{Proposition}[section]
\theoremstyle{definition}
\newtheorem{remark}{Remark}[section]
\begin{document}
\maketitle
%
%
\begin{abstract}
Let $G$ be a simple graph and let $S$ be a subset of its vertices. We say that $S$ is $P_3$-convex if every vertex $v \in V(G)$ that has at least two neighbors in $S$ also belongs to $S$. The $P_3$-hull set of $S$ is the smallest $P_3$-convex set of $G$ that contains $S$. Carath\'eodory number of a graph $G$, denoted by $c(G)$, is the smallest integer $c$ such that for every subset $S \subseteq V(G)$ and every vertex $p$ in the $P_3$-hull of $S$, there exists a subset $F \subseteq S$ with $|F| \leq c$ such that $p$ belongs to the $P_3$-hull of $F$. In this article, we present upper bounds and formulas for the $P_3$-Carathéodory number in Hamming graphs, which are defined as the Cartesian product of $n$ complete graphs.
\end{abstract}
\noindent\textbf{Keywords:} Carath\'eodory number, Hull number, Hamming graph, $P_3$-Con\-vexity.
\section{Introduction}\label{intro}
A convexity on a graph can be understood as a rule for spreading ``contamination'' from a given set of initially contaminated vertices. Various such rules have been studied in the literature, leading to the definition of several graph parameters associated with different convexities. In particular, many authors have investigated the behavior of these parameters in well-structured graph classes~\cite{Grippo2021, Bedo2023, Liao2023}, including those defined via graph products~\cite{Mario, Bresar2023}. These problems can be quite challenging, even when the underlying graph has a seemingly simple structure. A seminal result in this direction was provided by Bollob\'as in 2006 in~\cite{bollobas2006art}, motivating a broad line of subsequent research. In 2020, Bo{\v{s}}tjan Bre{\v{s}}ar and Mario Valencia-Pavón studied the $P_3$-hull number in Hamming graphs. The first author, together with other collaborators, also analyzed the $(p, r)$-spreading number on grids, obtaining a closed formula \cite{BDEH-2024}. In a related line of work, Benavidez and coauthors investigated $3$-percolation numbers~\cite{BBLN-2024}, deriving closed formulas for tori, as well as upper and lower bounds for grids and explicit formulas for certain specific cases.

In convex geometry, Carath\'eodory's theorem states that if a point $x \in \mathbb{R}^d$ lies in the convex hull of a set $P$, then $x$ can be expressed as a convex combination of at most $d + 1$ points from $P$ \cite{caratheodory1911}. In other words, there exists a subset $P' \subseteq P$ with at most $d + 1$ points such that $x$ lies in the convex hull of $P'$. Equivalently, $x$ belongs to an $r$-simplex with vertices in $P$, where $r \leq d$. This classical notion has been extended to graphs under various notions of convexity. Over the past five decades, the Carath\'eodory number has been investigated in several graph convexities, yielding numerous results. A selection of these can be found in~\cite{Douchet1988, CM1996, PWW2008, barbarosa, CoelhoDRS2014}.

All graphs considered in this article are finite, undirected, and simple (i.e., they contain no loops or multiple edges). Standard graph-theoretic concepts and definitions not explicitly defined here can be found in~\cite{West01}. Given a graph $G$, we denote its vertex set by $V(G)$ and its edge set by $E(G)$. 

If $X$ is a finite set, we write $|X|$ to denote its cardinality. The \emph{order} of a graph is the number of its vertices, that is, $|V(G)|$. Given $u, v \in V(G)$, we say that $u$ is \emph{adjacent} to $v$ if $uv \in E(G)$. The \emph{neighborhood} of a vertex $u$, denoted by $N_G(u)$, is the set $\{v \in V(G) \colon uv \in E(G)\}$, and the \emph{closed neighborhood} of $u$ is denoted by $N_G[u] = N_G(u) \cup \{u\}$. 

The \emph{degree} of a vertex $u$, denoted by $d_G(u)$, is the number of its neighbors, that is, $|N_G(u)|$. The \emph{length} of a path is the number of edges it contains. The \emph{distance} between two vertices $u$ and $v$ in $G$, denoted by $d_G(u,v)$, is the minimum length among all paths with $u$ and $v$ as endpoints. Given a set of vertices $S \subseteq V(G)$, we denote by $G[S]$ the subgraph of $G$ induced by $S$.
 	
A \emph{convexity} on a graph $G$ is a pair $(V(G), \mathcal{C})$, where $\mathcal{C}$ is a family of subsets of $V(G)$ satisfying the following conditions: $\emptyset \in \mathcal{C}$, $V(G) \in \mathcal{C}$, and $\mathcal{C}$ is closed under intersections; that is, $V_1 \cap V_2 \in \mathcal{C}$ for every $V_1, V_2 \in \mathcal{C}$. Each set in the family $\mathcal{C}$ is called a \emph{$\mathcal{C}$-convex set}.

Let $\mathcal{P}$ be a set of paths in $G$. If $u$ and $v$ are vertices of $G$, the \emph{$\mathcal{P}$-interval of $u$ and $v$}, denoted by $I_{\mathcal{P}}[u,v]$, is the set of all vertices that lie on some path $P \in \mathcal{P}$ with endpoints $u$ and $v$. For a subset $S \subseteq V(G)$, define $I_{\mathcal{P}}[S] = \bigcup_{u,v \in S} I_{\mathcal{P}}[u,v]$. 

Let $\mathcal{C}$ be the family of all vertex subsets $S \subseteq V(G)$ such that, for every path $P \in \mathcal{P}$ with both endpoints in $S$, all vertices of $P$ also belong to $S$. In other words, $\mathcal{C}$ consists of all subsets $S$ such that $I_{\mathcal{P}}[S] = S$. It is easy to verify that $(V(G), \mathcal{C})$ defines a convexity on $G$, and $\mathcal{C}$ is called the \emph{path convexity generated by $\mathcal{P}$}. 

The \emph{$P_3$-convexity} is the path convexity generated by the set of all paths of length two. Equivalently, a \emph{$P_3$-convex set} is a subset $S \subseteq V(G)$ such that every vertex $v \in V(G) \setminus S$ has at most one neighbor in $S$.

The \emph{$\mathcal C$-hull set} of a set $R\subseteq V(G)$ is the minimum $\mathcal C$-convex set of $G$ containing $R$, denoted by $\mathcal{H}_G[R]$. A \emph{$\mathcal C$-hull set} of $G$ is a set of vertices whose $\mathcal C$-hull set is $V(G)$, and the minimum cardinality of a $\mathcal C$-hull set of $G$,  denoted by $h(G)$, is the \emph{$\mathcal C$-hull number}.

The \emph{Carathéodory number} of a graph $G$, denoted by $c(G)$, is the smallest integer $c$ such that for every subset $S \subseteq V(G)$ and every vertex $p \in \mathcal{H}[S]$, there exists a subset $F \subseteq S$ with $|F| \leq c$ satisfying $p \in \mathcal{H}[F]$. The \emph{boundary} of $\mathcal{H}(S)$ is defined as $\partial \mathcal{H}(S) = \mathcal{H}(S) \setminus \bigcup_{u \in S} \mathcal{H}(S \setminus \{u\})$. A set $S \subseteq V(G)$ is called a \emph{Carathéodory set} of $G$ if $\partial \mathcal{H}(S) \neq \emptyset$. Equivalently, the Carathéodory number of $G$ is the largest cardinality of a Carathéodory set of $G$~\cite{CoelhoDRS2014}.

Let $G_i$, for $1 \leq i \leq n$, be arbitrary graphs. The graph $G_1 \square \cdots \square G_n$, known as the \emph{Cartesian product}, is defined as the graph whose vertex set is $V(G_1) \times \cdots \times V(G_n)$. Two vertices $(x_1, \ldots, x_n)$ and $(y_1, \ldots, y_n)$ are adjacent if and only if there exists an index $j \in \{1, \ldots, n\}$ such that $x_j y_j \in E(G_j)$ and $x_i = y_i$ for all $i \neq j$.

If for each $i \in \{1, \ldots, n\}$ the graph $G_i$ is a complete graph $K_{r_i}$, then $H_n(r_1, \ldots, r_n) = K_{r_1} \square K_{r_2} \square \cdots \square K_{r_n}$, with $n > 1$ and $r_i \geq 1$, is called a \emph{Hamming graph}. When the context is clear, we will write $H_n$ instead of $H_n(r_1, \ldots, r_n)$.

The \emph{dimension} of a Hamming graph is the number of its factors, that is, the number of coordinates of its vertices. If $r_i = 1$ for some $1 \leq i \leq n$, we define $H_n(r_1, \ldots, r_n) = K_{r_1} \square \cdots \square K_{r_{i-1}} \square x \square K_{r_{i+1}} \square \cdots \square K_{r_n}$, where $V(K_{r_i}) = \{x\}$ consists of a single vertex. Throughout the paper, we use the set $\{0,\dots, r-1\}$ to refer to the vertex set of a complete $K_r$.

Let $\Sigma$ be a finite alphabet. Given two $r$-tuples $w_1$ and $w_2$ over $\Sigma$, the \emph{Hamming distance} $H(w_1, w_2)$ is the number of positions in which $w_1$ and $w_2$ differ. Vertices of a Hamming graph can be labeled so that the distance between any two vertices coincides with the Hamming distance between their labels.

A graph $G$ is a Hamming graph if and only if each vertex $v \in V(G)$ can be labeled with a tuple $w(v)$ of length equal to the dimension of $G$ such that $H(w(u), w(v)) = d_G(u, v)$ for all $u, v \in V(G)$. Such a labeling is called a \emph{Hamming labeling} (see~\cite{product}).

The article is organized as follows. In Section~\ref{caratheodory number}, we present preliminary concepts and technical results concerning the $P_3$-convexity in graphs, with a particular focus on Hamming graphs. This includes key lemmas on the structure of the $P_3$-hull number and Carathéodory sets. In Section~\ref{sec: upper bounds}, we derive upper bounds for the Carathéodory number in Hamming graphs. We also establish a recursive inequality that relates the Carathéodory number to the dimension of the graph. Section~\ref{sec: caratheodroy number} contains the main results of the paper. We provide closed formulas for the Carathéodory number of Hamming graphs in the case where each factor is a complete graph with at least three vertices. The proof is divided into three cases according to the congruence class of the dimension modulo $3$. Finally, in Section~\ref{sec: open questions}, we summarize our findings and outline some open questions for future research, including conjectures about the Carathéodory number when at least one factor is $K_2$.
\section{Preliminary results}\label{caratheodory number}
In this section, we present preliminary results that will be used throughout the paper. 
\begin{prop}\label{prop barbarosa}\cite[Proposition 2.1 ]{barbarosa}.
Let $G$ be a graph, and let $S$ be a Carath\'eodory set of $G$.
\begin{enumerate}
\item  If $G$ has order at least $2$ and is either complete, or a path, or a cycle,
then $c(G) = 2$.
\item No proper subset $S'$ of $S$ satisfies $\mathcal{H}_G(S') = V (G)$.
\item The convex hull $\mathcal{H}_G(S)$ of $S$ induces a connected subgraph of $G$.
\end{enumerate}
\end{prop}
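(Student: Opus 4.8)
The plan is to treat the three items separately. Items (2) and (3) follow formally from the definition $\partial\mathcal{H}_G(S)=\mathcal{H}_G(S)\setminus\bigcup_{u\in S}\mathcal{H}_G(S\setminus\{u\})$ together with monotonicity of the hull operator ($R_1\subseteq R_2$ implies $\mathcal{H}_G(R_1)\subseteq\mathcal{H}_G(R_2)$), so I would dispatch them first; item (1) requires an explicit description of $P_3$-hulls in the three families. For item (2): since $S$ is a Carathéodory set, choose $p\in\partial\mathcal{H}_G(S)$, so that $p\in\mathcal{H}_G(S)$ and $p\notin\mathcal{H}_G(S\setminus\{u\})$ for all $u\in S$. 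If some proper subset $S'\subsetneq S$ had $\mathcal{H}_G(S')=V(G)$, pick $u\in S\setminus S'$; then $S'\subseteq S\setminus\{u\}$ gives $V(G)=\mathcal{H}_G(S')\subseteq\mathcal{H}_G(S\setminus\{u\})$, so $p\in\mathcal{H}_G(S\setminus\{u\})$, contradicting $p\in\partial\mathcal{H}_G(S)$.

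For item (3), fix $p\in\partial\mathcal{H}_G(S)$ and let $C$ be the connected component of $G[\mathcal{H}_G(S)]$ containing $p$; I aim to show $C=\mathcal{H}_G(S)$. Writing the hull as the limit of $H_0=S$, $H_{t+1}=H_t\cup\{v:|N_G(v)\cap H_t|\ge2\}$, gives two facts. First, every component of $G[\mathcal{H}_G(S)]$ meets $S$: the first vertex of a component ever added would need two neighbours already present, but those are adjacent to it, hence lie in the same component, hence were added strictly earlier --- impossible unless they belong to $S$. The same argument shows $\mathcal{H}_G(S\cap C)\subseteq C$. Second, the set $D:=\mathcal{H}_G(S\cap C)\cup(\mathcal{H}_G(S)\setminus C)$ is $P_3$-convex: if $v\notin D$ had two neighbours in $D\subseteq\mathcal{H}_G(S)$ then $v\in\mathcal{H}_G(S)$, and a short case analysis according to whether the component of $v$ equals $C$ or not --- using that a vertex adjacent to two vertices of $C$ lies in $C$, together with $D\cap C=\mathcal{H}_G(S\cap C)$ --- forces $v\in D$, a contradiction. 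Since $D\supseteq S$, convexity gives $\mathcal{H}_G(S)\subseteq D$, hence $\mathcal{H}_G(S)\cap C\subseteq\mathcal{H}_G(S\cap C)$ and in particular $p\in\mathcal{H}_G(S\cap C)$. If $C\ne\mathcal{H}_G(S)$, another component contains some $u\in S$, and then $S\cap C\subseteq S\setminus\{u\}$ forces $p\in\mathcal{H}_G(S\setminus\{u\})$, a contradiction; hence $G[\mathcal{H}_G(S)]$ is connected. The genuinely delicate point is the $P_3$-convexity of $D$, where the ``one new neighbour at a time'' nature of the spreading rule must be exploited carefully.

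For item (1), the bound $c(G)\le2$ is proved family by family. For $K_m$ with $m\ge2$, any two distinct vertices span $V(K_m)$ (every remaining vertex is adjacent to both), so $\mathcal{H}_G(S)=V(G)$ whenever $|S|\ge2$, and then any $p\in\mathcal{H}_G(S)$ already lies in $\mathcal{H}_G(\{a,b\})$ for any $a,b\in S$, while $|S|\le1$ is trivial. For $P_m$ and $C_m$, I would first prove that $\mathcal{H}_G(S)$ is obtained from $S$ by filling, in a single round, exactly the ``gaps'' (maximal runs of vertices outside $S$) of length one: a second round changes nothing, since the surviving gaps all have length at least two and are untouched by those fillings. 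Consequently each $p\in\mathcal{H}_G(S)\setminus S$ is the unique vertex of a filled length-one gap, flanked by two vertices $a,b\in S$ at distance two, and $p\in\mathcal{H}_G(\{a,b\})$. For the matching lower bound it suffices to exhibit a Carathéodory set of size two: in $K_m$ with $m\ge3$ any pair $\{a,b\}$ satisfies $\partial\mathcal{H}_G(\{a,b\})=V(G)\setminus\{a,b\}\ne\emptyset$, and in $P_m$ or $C_m$ with $m\ge3$ two vertices at distance two have $P_3$-hull containing the middle vertex, which lies in $\partial\mathcal{H}_G$.

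The main obstacle I anticipate is the cycle instance of item (1): one must justify that a single round of fillings already reaches the hull and then, for each newly created vertex, pin down a two-element subset of $S$ whose $P_3$-hull contains it, while handling the small cycles $C_3$ and $C_4$ separately, since there a distance-two pair can have $P_3$-hull equal to the entire vertex set rather than a three-vertex arc. (The lower-bound half moreover needs $G$ to have at least three vertices; in the degenerate identification $K_2=P_2$ the same arguments give $c(G)\le2$ but not equality.)
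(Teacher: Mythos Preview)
The paper does not give its own proof of this proposition: it is quoted verbatim from \cite{barbarosa} and used as a black box, so there is nothing in the present paper to compare your argument against.

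That said, your proposal is essentially a correct self-contained proof. Your treatment of items~(2) and~(3) via monotonicity of the hull operator is the standard one; in~(3) your set $D=\mathcal{H}_G(S\cap C)\cup(\mathcal{H}_G(S)\setminus C)$ can be streamlined by first observing directly that every component $V(C_i)$ of $G[\mathcal{H}_G(S)]$ is itself $P_3$-convex (a vertex with two neighbours in $V(C_i)\subseteq\mathcal{H}_G(S)$ lies in $\mathcal{H}_G(S)$ and is adjacent to $C_i$, hence lies in $C_i$), from which $V(C_i)=\mathcal{H}_G(S\cap V(C_i))$ follows at once and your contradiction with $p\in\partial\mathcal{H}_G(S)$ goes through. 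Your analysis for item~(1) is also correct, including the caveat you flag at the end: under the definition used here one has $c(K_2)=c(P_2)=1$, so the lower bound $c(G)\ge 2$ genuinely requires $|V(G)|\ge 3$. This is a wrinkle in the cited statement rather than in your proof.
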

The following two technical lemmas, due to Bre\v{s}ar and Valencia-Pabon, describe the construction rules for the $P_3$-hull of a set in a Hamming graph.
Let $\{c_{i_1},\,\ldots,\,c_{i_k}\}\subseteq \{1,\,\ldots,\, n\}$, where $1\le i_1<\cdots<i_k\le n$ with $1\le k< n$. A \emph{Hamming subgraph} of a Hamming graph $H_n$ is a subgraph induced by a set of the form $\{x\in V(H_n):\, x_{i_j} = c_{i_j} \mbox{ for all } 1\le j\le k\}$. If $H_n$ is a Hamming graph. We say that such a Hamming subgraph has dimension $k$. It is not hard to prove that $P_3$-hull of a set in a Hamming graph is always a disjoint union of Hamming subgraphs.
\begin{lem}~\cite[Lemma~3]{Mario}\label{lem dist Mario}
	Let $H_n$ be a Hamming graph with $n > 1$ and $r_i > 1$ for $1 \leq i \leq n$. Let $S \subset V(H_n)$ be a set such that $\mathcal{H}[S]$ is a connected subgraph; i.e., $\mathcal{H}[S]$ is a Hamming subgraph with dimension $k$.
\begin{itemize}
	\item If $x \in V(H_n)$ such that $d_{H_n}(x,S)=1$, then $\mathcal{H}(S \cup \{x\})$ induces a Hamming subgraph of dimension $k + 1$.
	\item If $x \in V(H_n)$ such that $d_{H_n}(x,S)=2$, then $\mathcal{H}(S \cup \{x\})$ induces a Hamming subgraph of dimension $k + 2$.
	\item If $x \in V(H_n)$ such that $d_{H_n}(x,S)> 2$, then $\mathcal{H}(S \cup \{x\})$ is the disjoint union of $H[S]$ and the singleton subgraph induced by $\{x\}$..
\end{itemize}
\end{lem}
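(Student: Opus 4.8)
The plan is to analyze how adding a single vertex $x$ to a set $S$ whose $P_3$-hull is a Hamming subgraph $\mathcal{H}[S]$ of dimension $k$ propagates the convexity rule, keeping track of which coordinates become "free." Fix a Hamming labeling. After relabeling coordinates we may assume $\mathcal{H}[S]=\{y\in V(H_n):\, y_i=c_i \text{ for } i=k+1,\dots,n\}$ for fixed constants $c_{k+1},\dots,c_n$; so the first $k$ coordinates are free and the last $n-k$ are fixed. The key observation I would use repeatedly: if a set $T$ contains two vertices differing only in coordinate $j$, and $T$ already forms a Hamming subgraph on some coordinate block, then every vertex obtained by toggling coordinate $j$ on any member of $T$ gets pulled in by the $P_3$-rule (it has two neighbors in $T$), so coordinate $j$ becomes free in $\mathcal{H}[T]$; and once a block of coordinates is free, adding any single new free coordinate $j$ makes the whole product block free. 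This is essentially the statement that the $P_3$-hull of a Hamming subgraph together with one extra vertex at distance $d$ is again a Hamming subgraph of dimension $k+d$ for $d\le 2$.

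For the first bullet, $d_{H_n}(x,S)=1$ means $x$ differs from some $s\in S$ in exactly one coordinate $j$. If $j\le k$, then $x\in\mathcal{H}[S]$ already and there is nothing to prove (dimension unchanged — but note this case is implicitly excluded since it would force $d_{H_n}(x,S)=0$; if $x\notin\mathcal H[S]$ and is at distance $1$, then necessarily $j>k$). So $j\in\{k+1,\dots,n\}$, and $x$ agrees with $\mathcal{H}[S]$ in all coordinates except that $x_j\neq c_j$. Now $x$ has, for every free coordinate $i\le k$, the two neighbors in $\mathcal{H}[S]\cup\{x\}$ obtained by… wait — more carefully: take the vertex $x'$ which agrees with $x$ except $x'_i$ ranges over $V(K_{r_i})$; each such $x'$ is adjacent to $x$ and adjacent to the corresponding vertex of $\mathcal{H}[S]$ (the one with $j$-coordinate $c_j$), hence has two neighbors in the current hull and is absorbed. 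Iterating over all $i\le k$ shows coordinate $j$ becomes free while coordinates $1,\dots,k$ stay free, and no other coordinate can become free (one checks the set $\{y:\, y_i=c_i \text{ for } i\notin\{1,\dots,k,j\}\}$ is already $P_3$-convex). Hence $\mathcal{H}(S\cup\{x\})$ is a Hamming subgraph of dimension $k+1$.

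For the second bullet, $d_{H_n}(x,S)=2$: $x$ differs from the relevant copy of $\mathcal H[S]$ in exactly two coordinates $j_1,j_2\in\{k+1,\dots,n\}$ (if either index were $\le k$ the distance would drop). First apply the $d=1$ argument to free the coordinate $j_1$ using the vertex $x^{(1)}$ that agrees with $x$ in coordinates $\{1,\dots,k,j_1\}$ and with $\mathcal H[S]$ in coordinate $j_2$: this vertex is at distance $1$ from $\mathcal H[S]$, so $\mathcal H(S\cup\{x^{(1)}\})$ has dimension $k+1$ with free coordinates $\{1,\dots,k,j_1\}$. But $x^{(1)}$ lies in $\mathcal H(S\cup\{x\})$? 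Not obviously — so instead I would argue directly: $x$ has, among vertices differing from it in a single free coordinate $i\le k$ together with the vertex of $\mathcal H[S]$ reached by toggling back both $j_1,j_2$, enough neighbors. The cleanest route: show $\mathcal H(S\cup\{x\})\supseteq \{y:\, y_i=c_i \text{ for } i\notin\{1,\dots,k,j_1,j_2\}\}$ by first absorbing, for each $i\le k$, the vertex agreeing with $x$ except in coordinate $i$ (two neighbors: $x$ and a vertex of $\mathcal H[S]$), which yields a $(k{+}1)$-dimensional box in coordinates $\{1,\dots,k,\text{one of }j_1,j_2\}$… The honest statement is that a short induction on $k$ plus a direct case check (essentially: two nested applications of the $d=1$ propagation) gives the $(k+2)$-dimensional Hamming subgraph, and that this set is $P_3$-convex because any vertex outside it differs from it in $\ge 1$ fixed coordinate among $\{k+1,\dots,n\}\setminus\{j_1,j_2\}$ and hence has at most one neighbor inside.

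For the third bullet, $d_{H_n}(x,S)>2$: every vertex of $\mathcal H[S]$ is at distance $\ge 3$ from $x$, so $x$ has at most… in fact exactly zero neighbors in $\mathcal H[S]$ (a neighbor would be at distance $\ge 2$ from $x$, impossible for an edge); hence $x$ is isolated from $\mathcal H[S]$ in $H_n[\mathcal H[S]\cup\{x\}]$. Moreover no vertex of $H_n$ can have two neighbors among $\mathcal H[S]\cup\{x\}$ with one of them being $x$: such a vertex would be adjacent to $x$, hence at distance $1$ from $x$ and at distance $\ge 2$ from $\mathcal H[S]$, but then it has a neighbor in $\mathcal H[S]$ only if its distance to $\mathcal H[S]$ is $1$ — contradiction. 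And $\mathcal H[S]$ itself is already convex. So $\mathcal H[S]\cup\{x\}$ is $P_3$-convex, giving the disjoint union. The main obstacle is the bookkeeping in the second bullet: making rigorous the claim that toggling free coordinates of $x$ and the "backbone" vertices of $\mathcal H[S]$ absorbs exactly the coordinate block $\{1,\dots,k,j_1,j_2\}$ and nothing more; I expect this to require either the stated induction on $k$ or an explicit description of the growing convex set at each absorption step, together with a clean verification that the candidate $(k+2)$-box is $P_3$-convex.
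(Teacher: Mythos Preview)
The paper does not prove this lemma; it is quoted verbatim from \cite[Lemma~3]{Mario} and used as a black box, so there is no in-paper argument to compare your sketch against.

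Your outline is sound for the first and third bullets. In the third bullet the triangle-inequality argument is correct and essentially complete. In the first bullet the idea is right, but ``iterating over all $i\le k$'' hides a short induction: after one round you have absorbed only vertices that differ from $x$ in a single free coordinate, and you still need to propagate to vertices differing in several free coordinates, as well as to use completeness of $K_{r_j}$ to free every value in coordinate $j$, not just $x_j$ and $c_j$.

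The second bullet contains a genuine error. You propose to absorb, for each $i\le k$, the vertex $v$ agreeing with $x$ except in coordinate $i$, claiming it has ``two neighbors: $x$ and a vertex of $\mathcal H[S]$''. But $v$ still has $v_{j_1}=x_{j_1}\neq c_{j_1}$ and $v_{j_2}=x_{j_2}\neq c_{j_2}$, so $d(v,\mathcal H[S])=2$ and $v$ has \emph{no} neighbor in $\mathcal H[S]$; its only neighbor in $\mathcal H[S]\cup\{x\}$ is $x$. The fix is to start from the two intermediate vertices $x^{j_1}(c_{j_1})$ and $x^{j_2}(c_{j_2})$: each of these is adjacent to $x$ and to a vertex of $\mathcal H[S]$, hence is absorbed; each is then at distance $1$ from $\mathcal H[S]$, so the first bullet yields the two $(k{+}1)$-dimensional boxes on coordinate sets $\{1,\dots,k,j_2\}$ and $\{1,\dots,k,j_1\}$ respectively; finally, any vertex $y$ of the target $(k{+}2)$-box with $y_{j_1}\neq c_{j_1}$ and $y_{j_2}\neq c_{j_2}$ has the two distinct neighbors $y^{j_1}(c_{j_1})$ and $y^{j_2}(c_{j_2})$ already in those boxes. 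The $P_3$-convexity of the resulting $(k{+}2)$-box is exactly as you describe.
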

Given two subsets $S_1$ and $S_2$ of a graph $G$, the \emph{distance} between $S_1$ and $S_2$ is defined as the minimum $d_G(S_1, S_2) = \min\{ d_G(u_1, u_2) \colon u_1 \in S_1,\, u_2 \in S_2 \}$.
\begin{lem}~\cite[Lemma~4]{Mario}\label{lem2 dist Mario}
	Let $H_n$ be a Hamming graph, with $n > 1$ and $r_i > 1$ for $1 \leq i \leq n$,  and let $H^1$ and $H^2$ be two Hamming subgraphs of $H_n$ with dimension $k$ and $k'$, respectively.
\begin{itemize}
	\item If $d_{H_n}(V(H^1), V(H^2)) = 1$, then $\mathcal {H}(V(H^1) \cup V(H^2))$ induces a Hamming subgraph of dimension at most $k + k' + 1$.
	\item If $d_{H_n}(V(H^1), V(H^2)) = 2$, then $\mathcal {H}(V(H^1) \cup V(H^2))$ induces a Hamming subgraph of dimension at most $k + k' + 2$.
	\item If $d_{H_n}(V(H^1), V(H^2)) > 2$, then $\mathcal {H}(V(H^1) \cup V(H^2))$ is the disjoint union of $H^1$ and $H^2$.
\end{itemize}
\end{lem}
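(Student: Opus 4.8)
The plan is to parametrise each Hamming subgraph by its set of fixed coordinates, rephrase the hypothesis on $d_{H_n}(V(H^1),V(H^2))$ in terms of those coordinates, and then treat the three cases. Write $H^1=\{x\in V(H_n):x_i=a_i\text{ for all }i\in A\}$ and $H^2=\{x\in V(H_n):x_i=b_i\text{ for all }i\in B\}$, where $A,B\subseteq\{1,\dots,n\}$ are the fixed-coordinate sets, so that $|A|=n-k$ and $|B|=n-k'$. The first step is the elementary distance formula $d_{H_n}(V(H^1),V(H^2))=|D|$, where $D:=\{i\in A\cap B:a_i\ne b_i\}$: a pair $(u,v)\in V(H^1)\times V(H^2)$ can be chosen so as to agree on every coordinate outside $A\cap B$ (set a coordinate that is free on one side equal to the value fixed on the other, and coordinates free on both sides equal to each other), while on $A\cap B$ the pair is forced to disagree precisely on $D$. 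In particular $D\ne\emptyset$ already forces $V(H^1)\cap V(H^2)=\emptyset$, so in all three cases the two subgraphs are disjoint.

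For the case $d>2$ I would check that $V(H^1)\cup V(H^2)$ is already $P_3$-convex. If a vertex $v$ outside this set had two distinct neighbours $u_1,u_2$ in it, then $u_1,u_2$ cannot both lie in $V(H^1)$ — a Hamming subgraph is $P_3$-convex (two distinct common neighbours of an outside vertex each differ from it in a single coordinate, and a quick check forces those coordinates to coincide, hence $u_1=u_2$), so $v$ would belong to $H^1$ — and likewise not both in $V(H^2)$; hence $u_1\in V(H^1)$, $u_2\in V(H^2)$, and $d_{H_n}(u_1,u_2)\le 2<d$, contradicting the distance formula. Therefore $\mathcal{H}(V(H^1)\cup V(H^2))=V(H^1)\cup V(H^2)$, and since $d>1$ there are no edges between the two parts, so this is the disjoint union of $H^1$ and $H^2$.

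For $d\in\{1,2\}$ I would first establish connectedness: choosing $u\in V(H^1)$, $v\in V(H^2)$ with $d_{H_n}(u,v)=d$, either $u$ and $v$ are adjacent ($d=1$) or the internal vertex $w$ of a shortest $u$--$v$ path has two neighbours in $V(H^1)\cup V(H^2)$ and hence lies in $\mathcal{H}(V(H^1)\cup V(H^2))$ ($d=2$); either way the hull is connected, so by the quoted fact that a $P_3$-hull in a Hamming graph is a disjoint union of Hamming subgraphs it is in fact a single Hamming subgraph. For the dimension bound I would use the Hamming subgraph obtained by fixing only the coordinates on which $H^1$ and $H^2$ already agree,
\[
H^{*}:=\{x\in V(H_n):x_i=a_i\ (=b_i)\text{ for all }i\in (A\cap B)\setminus D\},
\]
which contains both $H^1$ and $H^2$ and hence, being convex, contains $\mathcal{H}(V(H^1)\cup V(H^2))$. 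Its dimension is
\[
n-\bigl|(A\cap B)\setminus D\bigr|=\bigl(n-|A\cap B|\bigr)+|D|=\bigl|(\{1,\dots,n\}\setminus A)\cup(\{1,\dots,n\}\setminus B)\bigr|+d\le k+k'+d,
\]
which is $k+k'+1$ when $d=1$ and $k+k'+2$ when $d=2$; since the hull is a Hamming subgraph contained in $H^{*}$, its dimension is at most $\dim H^{*}$, which gives the claim.

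I expect the only non-mechanical step to be spotting $H^{*}$ and observing that $|(\{1,\dots,n\}\setminus A)\cup(\{1,\dots,n\}\setminus B)|\le k+k'$: this sub-additivity is exactly what absorbs any overlap between the free coordinates of $H^1$ and $H^2$, and is the reason the conclusion is stated as an upper bound rather than an equality. The remaining ingredients — the distance formula, the $P_3$-convexity of Hamming subgraphs, the argument for $d>2$, and connectedness for $d\le 2$ — are routine, and the decomposition of a $P_3$-hull in a Hamming graph into Hamming subgraphs is borrowed from the surrounding discussion.
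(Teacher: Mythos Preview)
The paper does not prove this lemma at all --- it is quoted from \cite[Lemma~4]{Mario} and used as a black box, so there is nothing to compare against. Your argument is correct and self-contained: the distance formula $d_{H_n}(V(H^1),V(H^2))=|D|$ with $D=\{i\in A\cap B:a_i\ne b_i\}$ is right; for $d>2$ the key observation that a vertex outside a Hamming subgraph has at most one neighbour in it (if $v\notin H^1$ disagrees with the fixed values on some $j\in A$, any neighbour of $v$ in $H^1$ must be $v^j(a_j)$) forces two neighbours in $V(H^1)\cup V(H^2)$ to straddle the two parts, contradicting $d>2$; and for $d\le 2$ the enveloping Hamming subgraph $H^{*}$ fixing only $(A\cap B)\setminus D$ has dimension $n-|A\cap B|+|D|\le k+k'+d$, which is exactly the claimed bound once you note the hull is connected and hence a single Hamming subgraph. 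The only cosmetic point is that your parenthetical explanation of why Hamming subgraphs are $P_3$-convex is a little compressed; the one-line version above is cleaner.
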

%
%

Given an $n$-tuple $x$, we denote by $x^i(\ell)$ the $n$-tuple obtained from $x$ by replacing the $i$-th coordinate $x_i$ with $\ell$. Note that it is possible that $x_i = \ell$.

Let $U$ be a Hamming subgraph of $H_n$ such that, for some fixed $i \in \{1, \ldots, n\}$ and $k \in \{0, \ldots, r_i - 1\}$, every vertex $v \in U$ satisfies $v_i = k$. We define
\[
U^i = \left\{ x^i(\ell) \colon x \in U,\ 1 \leq \ell \leq r_i - 1 \right\}.
\]

In the sequel we write $H_n$ to denote the Hamming graph $H_n(r_1,\dots,\, r_n)$ of dimension $n$, with $n > 1$ and $r_i > 1$ for every $ 1\leq i \leq n$. If $G\cong H_n$ we identify the vertices of $G$ by the corresponding $n$-tuples in $H_n$.

Lemmas~\ref{lem dist Mario} and~\ref{lem2 dist Mario} imply the following remark.
\begin{remark}\label{rmk: replacing-a-coordinate}
	Let $G \cong H_n$, and let $S \subseteq V(G)$ such that $\mathcal{H}(S)$ is connected and $w \in S$.
	
	Suppose that for some $i \in \{1, \ldots, n\}$ and $k \in \{0, \ldots, r_i - 1\}$, all vertices $v \in \mathcal{H}(S)$ satisfy $v_i = k$. Let $\ell \in \{1, \ldots, r_i - 1\}$ with $\ell \neq k$. Then, one of the following conditions holds:
	\begin{enumerate}
		\item If $d_G(w, S \setminus \{w\}) \geq 2$, then
		\[
		\mathcal{H}\big((S \setminus \{w\}) \cup \{w^i(\ell)\}\big) = \mathcal{H}(S \setminus \{w\}) \cup G[\{w^i(\ell)\}].
		\]
		
		\item If $d_G(w, S \setminus \{w\}) \leq 1$, then
		\[
		\mathcal{H}\big((S \setminus \{w\}) \cup \{w^i(\ell)\}\big) = \big(\mathcal{H}(S)\big)^i.
		\]
	\end{enumerate}
\end{remark}
For instance, consider the Hamming graph $G \cong H_6$. Set $S = \{v_1, v_2, v_3\}$, where 
\[v_1 = (1, 0, 0, 0, 0, 0),
v_2 = (0, 1, 0, 0, 0, 0), 
v_3 = (0, 0, 1, 1, 0, 0)
.\]
Note that $\mathcal{H}(S) \cong H_4 \square 0 \square 0$.

Additionally, $\mathcal{H}\big((S \setminus \{v_3\}) \cup \{v_3^5(1)\}\big) = H_2 \square 0 \square 0 \square 0 \square 0 + G[v_3^5(1)]$, where $v_3^5(1) = (0, 0, 1, 1, 1, 0)$. 

If instead $v_3 = (0, 0, 1, 0, 0, 0)$, and thus $\mathcal{H}(S) \cong H_3 \square 0 \square 0 \square 0$, then
\[
\mathcal{H}\big((S \setminus \{v_3\}) \cup \{v_3^4(1)\}\big) = \big(\mathcal{H}(S)\big)^4,
\]
where$
\big(\mathcal{H}(S)\big)^4 \cong H_4 \square 0 \square 0.
$

To preserve a homogeneous notation, we write $H_1$ to denote $K_r$, for some $r \geq 2$. 
\begin{remark} \label{rem:remark 2.1}
	The Carath\'eodory number of Hamming graphs satisfies the following properties:
		\begin{enumerate}
			\item $c(H_i)=2$ for each $i\in\{1,\, 2\}$, 
			\item $c(H_3)=3$, and
			\item $c(H_n)\geq n$, for all $n \geq 4$.
		\end{enumerate}
\end{remark}
\begin{proof}
We know, by  Proposition \ref{prop barbarosa}, that  $c(H_1) = 2$.

Consider the set $S=\{(1,0),\, (0,1)\}$ which is a  Carath\'eodory set of $G\cong H_2$. Indeed, $\partial \mathcal{H}(S)=V(G)\setminus \{(1,0),\,(0,1)\} \neq \emptyset$.  Therefore, $c(G)\geq 2$. 

Towards a contradiction, suppose that $c(H_2)\geq 3$. Let $S$ be a Carath\'eodory set of $H_2$ with $k = \vert S \vert \geq 3$. We claim that $S$ is a hull set of $H_2$. Otherwise, $\mathcal{H}(S)$ induces a subgraph isomorphic to either $K_{r_1} \square b$ or $a \square K_{r_2}$, for some $a \in \{0, \ldots, r_1-1\}$ and $b \in \{0, \ldots, r_2-1\}$. Without loss of generality, assume that $\mathcal{H}(S)$ induces $K_{r_1} \square b$. Hence, $S=\{(i_1,b),\,(i_2,b),\,\ldots,\, (i_k,b)\}$ with $0\le i_1\le\cdots\le i_k$ is a Carath\'eodory set of $K_{r_1} \square b\cong K_{r_1}$, where $k = c(H_2)$. This contradicts that  $c(K_{r_1}) = 2$

Notice that $S=\{(0,1,1),\, (1,0,1),\, (1,1,0)\}$ is a Carath\'eodory set of $G\cong H_3$. Indeed, 
\[\partial \mathcal{H}(S) =  V(G)\setminus (V( 1 \square K_{r_2} \square K_{r_3})\cup V( K_{r_1} \square 1 \square K_{r_3})\cup V( K_{r_1} \square K_{r_2} \square 1)).\] 
In particular, $(0,0,0) \in \partial \mathcal{H}(S)$, so $c(G)\geq 3$. 

Suppose, towards a contradiction, that $c(G)\geq 4$. Let $S =  \{x_1,\,x_2,\,\ldots,\,x_k\}$ be a Carath\'eodory set of $H_3$ with $k =\vert S \vert \geq 4$. By symmetry, it suffices to consider two cases for $G[\mathcal{H}(S \setminus \{x_k\})]$: $K_{r_1} \square b \square c\cong K_{r_1}$ or $K_{r_1} \square K_{r_2} \square  c$, for some $b \in \{0, \ldots, r_2-1\}$ and $c \in \{0, \ldots, r_3-1\}$. In either case, it is easy to see that $k-1\le 2$, so $k\le 3$, a contradiction. Therefore, $c(G) = 3$.

Let be $G \cong H_n$ and consider the set $S=\{v_1, v_2, \cdots, v_n\}$ with $n\ge 4$, where each $v_i$ is defined  by $(v_i)_j=0$ if $j=i$ and $(v_i)_j=1$ if $j \neq i$. It is clear that $S$ is a hull set; i.e., $\mathcal{H}(S)=V(G)$. 
By Lemma~\ref{lem dist Mario}, for each $1 \leq i \leq n$ we have:
\[G\big[\mathcal{H}(S \setminus \{v_i\})\big]= K_{r_1} \square K_{r_2} \square  \cdots \square K_{r_{i-1}} \square 1 \square K_{r_{i+1}} \square  \cdots \square K_{r_n}.\]

Therefore, $(0,0, \cdots, 0) \in \partial H(S)$ and $S$ is a Carath\'eodo\-ry set of $G$. Thus, $c(G)\geq n$.
\end{proof}
The following result provides a lower bound on the dimension of the hull set of a maximun Carath\'eodory set.
\begin{lem}
Let $G \cong H_n$. If $S$ is a maximun Carath\'eodory set of $G$, then $dim(G[\mathcal{H}(S)])\geq n-1$ for all $n \geq 4$.
\end{lem}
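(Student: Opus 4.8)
The plan is to establish the sharper statement that $\dim\big(G[\mathcal{H}(S)]\big)\ge |S|-1$ for \emph{every} Carath\'eodory set $S$ of $G\cong H_n$; the lemma then follows at once, since a maximum Carath\'eodory set has $|S|=c(H_n)\ge n$ by Remark~\ref{rem:remark 2.1}(3) when $n\ge 4$.

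Fix a Carath\'eodory set $S$ with $m:=|S|$ and choose $p\in\partial\mathcal{H}(S)$. Two observations set the stage. First, $S$ is \emph{irredundant}: if $x\in\mathcal{H}(S\setminus\{x\})$ for some $x\in S$, then $\mathcal{H}(S\setminus\{x\})=\mathcal{H}(S)\ni p$, contradicting $p\in\partial\mathcal{H}(S)$. Hence, for \emph{any} enumeration $x_1,\dots,x_m$ of $S$, writing $S_j=\{x_1,\dots,x_j\}$, we have $x_{j+1}\notin\mathcal{H}(S_j)$ (because $S_j\subseteq S\setminus\{x_{j+1}\}$), so the chain $\mathcal{H}(S_1)\subsetneq\mathcal{H}(S_2)\subsetneq\cdots\subsetneq\mathcal{H}(S_m)=\mathcal{H}(S)$ is strictly increasing. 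Second, by Proposition~\ref{prop barbarosa}(3) $\mathcal{H}(S)$ induces a connected subgraph, and since a $P_3$-hull in a Hamming graph is a disjoint union of Hamming subgraphs, $\mathcal{H}(S)$ is a single Hamming subgraph, of dimension $k:=\dim\big(G[\mathcal{H}(S)]\big)$.

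The key point is that one can choose the enumeration of $S$ so that \emph{every} prefix hull $\mathcal{H}(S_j)$ is connected. This reduces to the following ``peeling'' statement: if $R\subseteq V(H_n)$ with $\mathcal{H}(R)$ connected, then there is $x\in R$ with $\mathcal{H}(R\setminus\{x\})$ connected. Applying it repeatedly, starting from $S$, one removes vertices one at a time while keeping the hull connected; reversing the removal order gives an enumeration $x_1,\dots,x_m$ with $\mathcal{H}(S_j)$ connected for all $j$. Then each $\mathcal{H}(S_j)$ is a connected Hamming subgraph and $\mathcal{H}(S_{j-1})\subsetneq\mathcal{H}(S_j)$; since a Hamming subgraph properly contained in another has strictly smaller dimension, $\dim\mathcal{H}(S_j)\ge\dim\mathcal{H}(S_{j-1})+1$. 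As $\dim\mathcal{H}(S_1)=0$, we obtain $k=\dim\mathcal{H}(S_m)\ge m-1=|S|-1$, as desired.

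It remains to prove the peeling statement; I would do this by induction on $|R|$. The cases $|R|\le 2$ are immediate, as is the case where some $x\in R$ is redundant (then $\mathcal{H}(R\setminus\{x\})=\mathcal{H}(R)$). Otherwise $R$ is irredundant and $\mathcal{H}(R)\supsetneq R$; one picks a proper subset $R'\subsetneq R$ of maximum size with $\mathcal{H}(R')$ connected and argues that if $|R'|<|R|-1$ then every $x\in R\setminus R'$ satisfies $d_{H_n}(x,\mathcal{H}(R'))>2$ (otherwise $\mathcal{H}(R'\cup\{x\})$ is connected by Lemma~\ref{lem dist Mario}, contradicting maximality of $R'$), and then uses Lemmas~\ref{lem dist Mario} and~\ref{lem2 dist Mario} to trace how the closure of $R\setminus R'$ can grow and merge with $\mathcal{H}(R')$, deriving that $\mathcal{H}(R)$ would have to be disconnected --- a contradiction. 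Controlling this ``growing back'' of components is the main technical obstacle; it is essentially a bookkeeping argument built on Lemma~\ref{lem2 dist Mario}, with no fundamentally new idea, but it is the step that requires care.
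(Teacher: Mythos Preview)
Your approach is built on a claim that is actually \emph{false}, not merely hard to prove. You assert that every Carath\'eodory set $S$ of $H_n$ satisfies $\dim\big(G[\mathcal{H}(S)]\big)\ge |S|-1$. Since $\dim\le n$, this would force $c(H_n)\le n+1$ for all $n$. But Theorem~\ref{thm: main result} shows that $c(H_n)$ grows like $2^{n/3}$; already $c(H_8)=10>9$. So the sharper statement you are trying to prove contradicts the main result of the paper.

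The precise step that breaks is the ``peeling'' statement: it is not true that whenever $\mathcal{H}(R)$ is connected one can delete a vertex of $R$ and keep the hull connected. A small explicit counterexample in $H_6$ (with $r_i\ge 3$) is
\[
R=\{(0,0,0,0,0,0),\ (1,1,0,0,0,0),\ (2,2,1,1,1,1),\ (2,2,2,2,1,1)\}.
\]
Here $\mathcal{H}(\{a,b\})=K_{r_1}\square K_{r_2}\square 0^4=:A$ and $\mathcal{H}(\{c,d\})=2\square 2\square K_{r_3}\square K_{r_4}\square 1\square 1=:B$ are each $2$-dimensional with $d(A,B)=2$, so $\mathcal{H}(R)=V(H_6)$ is connected; yet removing any one vertex leaves a singleton at distance $4$ from the hull of the remaining pair, so all four sets $\mathcal{H}(R\setminus\{x\})$ are disconnected. (This $R$ is even a Carath\'eodory set.) Consequently no enumeration of $R$ has all prefix hulls connected, and your dimension-counting chain collapses. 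The heuristic in your last paragraph misses exactly this phenomenon: two halves of $S$ can each be far from the other's \emph{generators} while the two \emph{hulls} are close.

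The paper's proof does not attempt any such global bound. It argues directly by contradiction: if $\dim(G[\mathcal{H}(S)])\le n-2$, pick $u\in\partial\mathcal{H}(S)$, and add a single new vertex $w$ that agrees with $u$ on the fixed coordinates of $\mathcal{H}(S)$ but moves in two of the (at least two) unused coordinates, so $d(w,\mathcal{H}(S))=2$ while $d(w,\mathcal{H}(S\setminus\{v\}))\ge 3$ for every $v\in S$. Then $S\cup\{w\}$ is a strictly larger Carath\'eodory set, contradicting maximality. This uses only Lemma~\ref{lem dist Mario} and the maximality hypothesis, and avoids any statement about the size of $S$ relative to the dimension.
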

\begin{proof}
Let $S$ be a maximun Carath\'eodory set of $G$. By Proposition~\ref{prop barbarosa}, the subgraph $G[\mathcal{H}(S)]$ is connected.

Suppose, towards a contradiction, that $dim(G[\mathcal{H}(S)]) \leq n-2$. Without loss of generality, we can assume that $G[\mathcal{H}(S)] = H_h\square 0 \square \cdots \square 0$, with $h \leq n-2$. Since $S$ is a Carath\'eodory set, there exists a vertex 
\[u=(d_1, \cdots,d_h, 0,\cdots,0) \in \partial \mathcal{H}(S),\]
where $0\leq d_i \leq r_i-1$ for every $1 \leq i \leq h$. 

Let $S' = S\cup \{ w \}$, where  $w_i = d_i$  for every $1\le i\le h$, $w_{h + 1} = w_{h + 2} = 1$, and $w_j = 0$ for every $h + 3\le j\le n$. By lemma \ref{lem dist Mario}, we have 
\[G[\mathcal{H}(S')]=H_{h + 2}\square 0 \square \cdots \square 0.\] 
Define $\widehat{w}$ as follows: $\widehat{w}_1 =d_1+1$ (sum should be considered modulo $r_1$), $\widehat{w}_i = w_i$ for every $2\le i\le n$. 

On the one hand, it is clear that $\widehat{w} \in \mathcal{H}(S')$ and $\widehat{w} \notin \mathcal{H}(S'\setminus \{w\})=\mathcal{H}(S)$. On the other hand, it is easy to verify that $d(w, \mathcal{H}(S\setminus\{v\}))\geq 3$ for every $v \in S$. Therefore, by Lemma \ref{lem dist Mario}, 
\[\mathcal{H}(S'\setminus \{v\})=\mathcal{H}(S \cup \{w\} \setminus \{v\})=\mathcal{H}(S\setminus \{v\}) + G[\{w\}],\]
for every $v \in S$. 

Since $d(\widehat{w}, \mathcal{H}(S\setminus \{v\})) >2$ for every $v \in S$ and $\widehat{w} \neq w$, it follows that $\widehat{w} \notin \mathcal{H}(S'\setminus \{v\})$ for every $v\in S$. We have already proved that $\widehat{w} \in \partial \mathcal{H}(S')$. Therefore, $S'$ is a Carath\'eodory set with $\vert S'\vert > \vert S \vert$, contradicting the assumption that $S$ is a maximum Carath\'eodory set. The contradiction arises from assuming that $dim(G[\mathcal{H}(S)]) \leq n-2$. Therefore, $dim(G[\mathcal{H}(S)])\ge n-2$.
\end{proof}
The following two technical results lay the theoretical groundwork for the next section.
\begin{lem}\label{lem:k+1}
Let $G \cong H_n$. If $S$ is a Carath\'eodory set of $G$ such that $dim(G[\mathcal{H}(S)])=k < n$, then there exists a Carath\'eodory set $T$ such that $|T|\ge |S|$ and $dim(G[\mathcal{H}(T)])=k+1$.
\end{lem}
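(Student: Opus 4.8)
The plan is to take a Carath\'eodory set $S$ with $\dim(G[\mathcal{H}(S)])=k<n$ and enlarge its hull to dimension $k+1$ while keeping it a Carath\'eodory set and not decreasing its cardinality. Since $S$ is a Carath\'eodory set, $\partial\mathcal{H}(S)\neq\emptyset$, so we may pick $p\in\partial\mathcal{H}(S)$; by Proposition~\ref{prop barbarosa}(3) the hull $\mathcal{H}(S)$ induces a connected subgraph, i.e.\ a Hamming subgraph, and without loss of generality $G[\mathcal{H}(S)]=H_k\square 0\square\cdots\square 0$ with $p=(p_1,\dots,p_k,0,\dots,0)$. The natural candidate is to replace some vertex $w\in S$ by a vertex $w'$ that agrees with $w$ except in one of the ``frozen'' coordinates $j\in\{k+1,\dots,n\}$, where we set $w'_j=1$; by Remark~\ref{rmk: replacing-a-coordinate}, if $d_G(w,S\setminus\{w\})\le 1$ this pushes the hull up to $(\mathcal{H}(S))^j$, which is a Hamming subgraph of dimension exactly $k+1$. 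So I would first argue that $S$ contains a vertex $w$ with $d_G(w,S\setminus\{w\})\le 1$: indeed, since $\mathcal{H}(S)$ is connected, $G[S\cup\{\text{the vertices forced into }\mathcal{H}(S)\}]$ is connected, and more directly, if every $w\in S$ had $d_G(w,S\setminus\{w\})\ge 3$ then by Lemma~\ref{lem dist Mario} adding vertices one at a time could never merge them into a single connected Hamming subgraph — one can make this precise by induction on $|S|$, since to build a connected hull at some stage two pieces at distance $\le 2$ must be amalgamated. (Alternatively, if the minimum pairwise-type distance is $2$ everywhere one can still find a suitable move; I will handle the distance-$2$ case separately if needed, replacing $w$ by $w'$ chosen so the hull grows by one dimension via Lemma~\ref{lem2 dist Mario}.)

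Set $T=(S\setminus\{w\})\cup\{w'\}$. Then $|T|=|S|$ (the vertex $w'$ is new, since $w'_j=1\neq 0=w_j$), and $\dim(G[\mathcal{H}(T)])=k+1$ by the above. It remains to verify that $T$ is a Carath\'eodory set, i.e.\ $\partial\mathcal{H}(T)\neq\emptyset$. The idea is to produce an explicit witness: take $q$ to be $p$ with its first coordinate changed, $q_1=p_1+1\pmod{r_1}$ (using $r_1>1$), and all other coordinates equal to those of $p$ except $q_j=1$. I claim $q\in\partial\mathcal{H}(T)$. First, $q\in\mathcal{H}(T)$ because $q$ lies in $(\mathcal{H}(S))^j=\mathcal{H}(T)$. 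Second, for each $t\in T$ we must check $q\notin\mathcal{H}(T\setminus\{t\})$. For $t=w'$: $\mathcal{H}(T\setminus\{w'\})=\mathcal{H}(S\setminus\{w\})$, and $q\notin\mathcal{H}(S\setminus\{w\})$ since $q$ differs in coordinate $j$ from all of $\mathcal{H}(S)\supseteq\mathcal{H}(S\setminus\{w\})$ (they all have $j$-th coordinate $0$). For $t=v\in S\setminus\{w\}$: here I would use that $p\in\partial\mathcal{H}(S)$ already gives $p\notin\mathcal{H}(S\setminus\{v\})$, hence (since $\mathcal{H}(S\setminus\{v\})$ is a union of Hamming subgraphs) $d_G(p,\mathcal{H}(S\setminus\{v\}))\ge 1$; combined with the extra coordinate change in position $1$ and the modification in position $j$, a short distance computation using Lemmas~\ref{lem dist Mario} and~\ref{lem2 dist Mario} shows $q\notin\mathcal{H}((T\setminus\{v\}))$. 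The cleanest route is to show $\mathcal{H}(T\setminus\{v\})=\bigl(\mathcal{H}(S\setminus\{v\})\bigr)^j$ or $\mathcal{H}(S\setminus\{v\})\cup G[\{w'\}]$ depending on $d_G(w',(S\setminus\{v,w\})\cup\cdots)$, and in each case verify $q$ misses it.

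The main obstacle I anticipate is the bookkeeping in the last step: guaranteeing that the chosen witness $q$ lies outside $\mathcal{H}(T\setminus\{v\})$ for \emph{every} $v\in T$ simultaneously, while the hull structure of $T\setminus\{v\}$ depends delicately on whether removing $v$ disconnects things or changes the relevant distances. One must be careful that the coordinate $j$ used to grow the dimension is ``fresh'' for all the sets $S\setminus\{v\}$ (it is, since all of $\mathcal{H}(S)$ has $j$-th coordinate $0$), and that modifying coordinate $1$ of $p$ genuinely leaves $\mathcal{H}(S)$ — this is where $p\in\partial\mathcal{H}(S)$ is used, together with the fact that the Hamming-subgraph structure from Lemma~\ref{lem dist Mario} forces hulls to be ``axis-aligned''. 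I would organize this as: (i) pick $w$ with $d_G(w,S\setminus\{w\})\le 1$; (ii) define $T$ and compute $\dim$; (iii) define $q$ and check membership in $\mathcal{H}(T)$; (iv) check $q\notin\mathcal{H}(T\setminus\{t\})$ for $t=w'$ and for $t\in S\setminus\{w\}$ separately, mirroring the distance-$\ge 3$ argument in the preceding lemma's proof. The argument for (iv) when $t\in S\setminus\{w\}$ should closely parallel the computation already carried out in the proof of the previous lemma, where a doubly-modified vertex $\widehat w$ is shown to lie at distance $>2$ from each $\mathcal{H}(S\setminus\{v\})$.
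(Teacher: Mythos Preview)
Your overall strategy---perturb one frozen coordinate of a well-chosen vertex of $S$ and then lift the boundary witness $p$ to a witness for the new set---matches the paper's, and your handling of the case where some $w\in S$ satisfies $d_G(w,S\setminus\{w\})\le 1$ is essentially correct. The gap is in the complementary case. You write that when every $w\in S$ has $d_G(w,S\setminus\{w\})\ge 2$ ``one can still find a suitable move \ldots\ replacing $w$ by $w'$ chosen so the hull grows by one dimension,'' but in general no such replacement exists. Take $G\cong H_5$ and $S=\{(0,0,0,0,0),(1,1,0,0,0),(0,0,1,1,0)\}$: this is a Carath\'eodory set with $\dim(G[\mathcal H(S)])=4$, all pairwise distances in $S$ are at least $2$, and by Lemma~\ref{lem dist Mario} any three vertices with connected hull span dimension at most $4$. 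Hence no $T$ with $|T|=|S|=3$ can reach dimension $5$. The paper's proof splits instead on whether some $v_i$ has $d(v_i,\mathcal H(S\setminus\{v_i\}))=1$; in the all-distance-$2$ case it \emph{adds} the vertex $v_1^{j}(1)$ rather than replacing, obtaining $T=S\cup\{v_1^{j}(1)\}$ with $|T|=|S|+1$, and then shows that the lifted witness lies in $\partial\mathcal H(T)$. Your argument needs this second branch.

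A secondary point: your witness $q$ modifies two coordinates of $p$ (the frozen coordinate $j$ and coordinate $1$), but the change in coordinate $1$ is unnecessary and can actually break the argument. When you reduce $q\notin(\mathcal H(S\setminus\{v\}))^{j}$ to a statement about the projection, you need $(p_1{+}1,p_2,\dots,p_k,0,\dots,0)\notin\mathcal H(S\setminus\{v\})$; but you only know $p\notin\mathcal H(S\setminus\{v\})$, and shifting the first (free) coordinate may land you inside one of the Hamming-subgraph components of $\mathcal H(S\setminus\{v\})$. The paper takes the simpler witness $v'=p^{j}(1)$: then $v'\in(\mathcal H(S\setminus\{v\}))^{j}$ if and only if $p\in\mathcal H(S\setminus\{v\})$, which is false because $p\in\partial\mathcal H(S)$. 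Dropping the coordinate-$1$ change makes your step~(iv) go through directly.
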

\begin{proof}
	Let $S=\{v_1,\ldots,v_r\}$ be a Carath\'eodory set of $G$. Without loss of generality, we may assume that $dim(G[\mathcal{H}(S)])= n-1$ and that $(v_i)_n = 0$ for every $1\le i\le r	$. Since $S$ is a Carath\'eodory set of $G$,  by Lemma \ref{lem dist Mario}, it follows that $0 < d(v_i, \mathcal{H}(S\setminus\{v_i\}))\leq 2$ for every $1 \leq i \leq r$. 

	First, assume that there exists $i$, $1 \leq i \leq r$, such that $d(v_i, \mathcal{H}(S\setminus\{v_i\}))=1$. Without loss of generality, suppose that $d(v_1, \mathcal{H}(S\setminus\{v_1\}))=1$. Let $v_1=(d_1,\ldots,d_{n-1},0)$ and define $v_{r+1}=v_1^n(1)$. 

	Suppose that $H = G[\mathcal{H}(S \setminus \{v_1\})] = C_1 + \cdots + C_k$, where the $C_i$'s are the connected components of $H$, and thus they are Hamming subgraphs of $G$. 

	Since $V(H)$ is a convex set of $G$, there exists exactly one index $1 \leq i\leq k$ such that $d(v_1, C_i)=1$. Moreover, $v_1$ is adjacent to only one vertex $w \in V(C_i)$, and  $d(v_1, C_j)>1$ for all $j \neq i$. 

	Let $ T = (S\setminus\{v_1\})\cup \{v_{r+1}\}$. Since $d(v_1,C_i)=1$ and $d(v_1,v_{r+1})=1$, it follows that $v_1 \in \mathcal{H}(T)$. Therefore, $\mathcal{H}(T) = V(G)$. 

	Let $v=(y_1,\cdots,y_{n-1},0) \in \partial \mathcal{H}(S)$, and define $v'= v^n(1)$. Clearly, $v' \notin \mathcal{H}(T \setminus \{v_{r+1}\})$. 

	Notice that for every $1 < j <r + 1$, $T\setminus\{v_j\} = S\setminus\{v_1,v_j\}\cup\{v_{r+1}\}$. Suppose that $\mathcal{H}(S\setminus \{v_j\}) = C_{1,j} + \cdots + C_{k_j, j}$ for each $1<j <r + 1$. By Remark~\ref{rmk: replacing-a-coordinate} and Lemma~\ref{lem2 dist Mario}, for every such $j$, $\mathcal{H}(T\setminus \{v_j\}) = C_{1,j}^n + \cdots + C_{k_j, j}$ and thus $v'\notin \mathcal{H}(T \setminus \{v_{r+1}\})$. 

	We have already shown that $v'\in\partial \mathcal{H}(T)$. Therefore, $T$ is a Carath\'eodory set of $G$ with $\textrm{dim}(\mathcal{H}(T)) = \textrm{dim}(\mathcal{H}(S)) + 1$. 

	Finally, assume that $d(v_j, G[\mathcal{H}(S\setminus\{v_j\}))=2$ for every  $1 \leq j \leq r$. Without loss of generality, suppose that $v_1=(d_1,\ldots,d_{n-1},0)$, and define $v_{r+1}=v_1^n(1)$. Let $T = S\cup\{v_{r + 1}\}$. By Lemma~\ref{lem dist Mario}, we have $\mathcal{H}(T)=V(G)$. 

	Suppose that $H_j = G[\mathcal{H}(S \setminus \{v_j\}]) = C_{1, j} + \cdots + C_{k_j,j}$, where the $C_{i,j}$'s are the connected components of $H_j$, and thus they are Hamming subgraphs of $G$. 

	Let $v=(y_1,\cdots,y_{n-1},0) \in \partial \mathcal{H}(S)$, and define $v' = v^n(1)$. Clearly, $v'\notin \mathcal{H}(T\setminus\{v_{r + 1}\})$. 

	On the one hand, $d(v_{r + 1}, C_{h,1})\ge 3$ for every $1\le h\le k_1$, by Lemma~\ref{lem dist Mario} we have $\mathcal{H}(T\setminus\{v_1\}) = \mathcal{H}(S\setminus \{v_1\})\cup\{v_{r + 1}\}$. Since $v\neq v_1$ and $v\notin \mathcal{H}(S\setminus \{v_1\})$, it follows that $v'\neq v_{r + 1}$ and $v'\notin \mathcal{H}(T\setminus\{v_1\})$. 

	On the other hand, assume, without loss of generality, that $v_1\in V(C_{1, j})$ for every $2\le j\le r$. By Remark~\ref{rmk: replacing-a-coordinate} and Lemma~\ref{lem2 dist Mario}, for every $2\le j\le r$, $\mathcal{H}(T\setminus\{v_j\}) =  C_{1, j}^n + \cdots + C_{k_j,j}$ and therefore $v'\notin \mathcal{H}(T \setminus \{v_j\})$.  We have already shown that $v'\in\partial \mathcal{H}(T)$. Therefore, $T$ is a Carath\'eodory set of $G$ with $\textrm{dim}(\mathcal{H}(T)) = \textrm{dim}(\mathcal{H}(S)) + 1$.
\end{proof}

We conclude the section with the following result.

\begin{coro}\label{cor: caratheodory set with hull of maximum dimension} 
	Let $G \cong H_n$. Then there exists a Carath\'eodory set $S$ such that $\dim(G[\mathcal{H}(S)]) = n$. Additionally, $S$ is a hull set of $G$. 
\end{coro}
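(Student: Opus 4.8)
The plan is to combine Remark~\ref{rem:remark 2.1}, which supplies a starting Carath\'eodory set, with Lemma~\ref{lem:k+1}, which lets us bootstrap the dimension of its hull one step at a time until it reaches $n$. For $n\ge 4$, Remark~\ref{rem:remark 2.1} already exhibits the Carath\'eodory set $S_0=\{v_1,\ldots,v_n\}$ with $(v_i)_j=0$ if $j=i$ and $(v_i)_j=1$ otherwise, and it is checked there that $\mathcal H(S_0)=V(G)$, i.e. $\dim(G[\mathcal H(S_0)])=n$; so in that range the first assertion is immediate with $S=S_0$, and moreover this $S$ is already a hull set. For the small cases $n\in\{1,2,3\}$, Remark~\ref{rem:remark 2.1} again furnishes explicit Carath\'eodory sets whose hull is all of $V(G)$ (the sets $\{(1,0),(0,1)\}$ for $H_2$ and $\{(0,1,1),(1,0,1),(1,1,0)\}$ for $H_3$, and the trivial verification for $H_1\cong K_r$), so those are handled by inspection. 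Hence a uniform statement holds.

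Alternatively, and this is the cleaner write-up, I would argue purely from Lemma~\ref{lem:k+1}. Take any Carath\'eodory set $S$ of $G$ — for instance a single edge $\{u,v\}$, which is a Carath\'eodory set since $\partial\mathcal H(\{u,v\})$ contains the interior vertex of the path, though one must start from a set whose hull is connected; more safely, start from the explicit set in Remark~\ref{rem:remark 2.1}. Let $k=\dim(G[\mathcal H(S)])$. If $k<n$, Lemma~\ref{lem:k+1} produces a Carath\'eodory set $T$ with $|T|\ge|S|$ and $\dim(G[\mathcal H(T)])=k+1$. Iterating this at most $n-k$ times yields a Carath\'eodory set $S^\star$ with $\dim(G[\mathcal H(S^\star)])=n$. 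Since a Hamming subgraph of $H_n$ of dimension $n$ is $H_n$ itself, $\mathcal H(S^\star)=V(G)$, so $S^\star$ is a hull set, proving both halves of the corollary.

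The only genuine subtlety — and the one place I would be careful — is the base of the induction: Lemma~\ref{lem:k+1} is stated for a Carath\'eodory set whose hull has a prescribed dimension $k<n$, so I need to make sure I actually possess a Carath\'eodory set to feed it. The safest choice is the set from Remark~\ref{rem:remark 2.1}, whose hull is already of dimension $n$ when $n\ge4$, or in low dimensions the explicit sets given there; in every case the corollary follows without invoking Lemma~\ref{lem:k+1} at all, but stating it via Lemma~\ref{lem:k+1} makes the logical dependency transparent and also shows that such a set exists \emph{for every} $n$. I do not expect any computational obstacle; the content is entirely in Lemma~\ref{lem:k+1}, and this corollary is just its iteration together with the observation that a dimension-$n$ Hamming subgraph of $H_n$ equals $V(H_n)$.
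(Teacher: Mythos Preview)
Your proposal is correct and matches the paper's intended argument: the corollary is stated without proof immediately after Lemma~\ref{lem:k+1}, so iterating that lemma from any starting Carath\'eodory set (your second write-up) together with the observation that a dimension-$n$ Hamming subgraph of $H_n$ is all of $V(H_n)$ is exactly what is meant. Your additional remark that the explicit sets in Remark~\ref{rem:remark 2.1} already have full-dimensional hull, making the corollary immediate without invoking Lemma~\ref{lem:k+1}, is a valid and even simpler shortcut.
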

\section{Upper bounds}\label{sec: upper bounds}

Given a graph $G$, a \emph{minimal hull set} is a hull set $S$ such that $\mathcal{H}(S)=V(G)$ and $\mathcal{H}(R)\subsetneq V(G)$ for all $R \subset S$. We denote by $p(G)$ the maximum cardinality of a minimal hull set. 

The following remark compares the parameter $p(H_n)$  in terms of the dimension of the Hamming graph.  
\begin{remark}\label{rem:remark 2..2}
 	$p(H_{n}) \geq p(H_{n-1})$.
\end{remark}
The next remark establishes a relation between $c(H_n)$ and $p(H_n)$. 
\begin{remark} \label{rem:remark 2.2}
	Let $G \cong H_n$. Then the following statements hold:
		\begin{itemize}
			\item If $S$ is a maximum Carath\'eodory set of $G$ such that $\textrm{dim}(\mathcal{H}(G[S])) = n$, then $S$ is a minimal hull set of $G$, and
			\item  $ c(H_n)\le p(H_n)$.
		\end{itemize}
\end{remark}
\begin{proof}
	Let $S$ be a maximun Carath\'eodory set of $G\cong H_n$ such that $G[\mathcal{H}(S)]$ has maximum dimension. By Corollary~\ref{cor: caratheodory set with hull of maximum dimension}, $S$ is a minimal hull set of $G$. 

	Lemma~\ref{lem:k+1} guaranties the existence of a maximum Crath\'eodory set of dimension $n$. Therefore, $c(H_n)\le p(H_n)$.
\end{proof}
The following technical lemma plays a central role in obtaining an upper bound for the Carath\'eodory number of a Hamming graph.
\begin{lem}\label{lemma: p(h_n)}
	Let $G \cong H_n$. Then, $p(H_{n})\le\max\{2p(H_{n-3}),p(H_{n-1}) + 1\}$ for every $n\ge 4$.
\end{lem}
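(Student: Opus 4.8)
The plan is to bound the cardinality of an arbitrary minimal hull set, so fix a minimal hull set $S$ of $G\cong H_n$ with $|S|=p(H_n)$ and pick any $v\in S$. The first ingredient is the closure‑operator identity $\mathcal H(A\cup\{v\})=\mathcal H(\mathcal H(A)\cup\{v\})$: from it one checks that $S\setminus\{v\}$ is itself a \emph{minimal} hull set of $H:=G[\mathcal H(S\setminus\{v\})]$, for if some $R\subsetneq S\setminus\{v\}$ satisfied $\mathcal H(R)=\mathcal H(S\setminus\{v\})$ then $\mathcal H(R\cup\{v\})=\mathcal H(\mathcal H(R)\cup\{v\})=\mathcal H(S)=V(G)$ with $R\cup\{v\}\subsetneq S$, contradicting minimality of $S$. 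Writing the connected components of $H$ as Hamming subgraphs $C_1,\dots,C_t$, one sees they are pairwise at distance $\ge 3$: a pair at distance $\le 2$ would, by Lemma~\ref{lem2 dist Mario}, have a connected hull contained in the convex set $V(H)$, contradicting that they are distinct components. Consequently $S\setminus\{v\}$ splits as a disjoint union $S_1\cup\cdots\cup S_t$ with $S_i\subseteq V(C_i)$, the same argument shows each $S_i$ is a minimal hull set of $C_i$, and hence $|S_i|\le p(H_{\dim C_i})$ (with the convention $p(H_0)=1$), using the monotonicity of $p$ from Remark~\ref{rem:remark 2..2}.

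Next I would split according to whether $H$ is connected. If $t=1$, then $C_1$ is a connected proper Hamming subgraph, so $\dim C_1\le n-1$, and adding $v$ back and applying Lemma~\ref{lem dist Mario} forces $\dim C_1\in\{n-2,n-1\}$. In either case $S\setminus\{v\}$ is a minimal hull set of $C_1\cong H_{\dim C_1}$, so $|S|=|S\setminus\{v\}|+1\le p(H_{\dim C_1})+1\le p(H_{n-1})+1$ by monotonicity, which is the second term of the claimed maximum. Since this conclusion is obtained as soon as \emph{some} $v\in S$ yields a connected $\mathcal H(S\setminus\{v\})$, the remaining task is the case where $\mathcal H(S\setminus\{v\})$ is disconnected for every $v\in S$.

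In that case fix any $v$ and $C_1,\dots,C_t$ with $t\ge 2$. I would first observe that every $C_i$ has dimension at most $n-3$: a Hamming subgraph of dimension $k$ in $H_n$ fixes $n-k$ coordinates and is therefore within distance $n-k$ of every vertex, so a component of dimension $\ge n-2$ would be at distance $\le 2$ from any other component, contradicting $t\ge 2$. The goal is then to partition $S$ into two parts, each a minimal hull set of a connected Hamming subgraph of dimension $\le n-3$, which immediately gives $|S|\le 2p(H_{n-3})$. To build such a partition I would order the components so that, setting $D_0=\{v\}$ and $D_k=\mathcal H(D_{k-1}\cup V(C_k))$, each merge is at distance $\le 2$ (possible because $V(G)=\mathcal H(D_0\cup V(C_1)\cup\cdots\cup V(C_t))$ is connected) and $D_t=V(G)$; minimality of $S$ forces the dimensions $\dim D_0<\dim D_1<\cdots<\dim D_t=n$ to increase strictly, so a suitable initial block $D_j$ has $\dim D_j\le n-3$ and the remaining components together with $v$ generate the complementary piece. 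As before, each block, being a subset $T\subseteq S$ for which no proper subset $T'\subsetneq T$ satisfies $\mathcal H(T'\cup(S\setminus T))=V(G)$, is a minimal hull set of its own hull, hence of size at most $p(H_{\dim})\le p(H_{n-3})$.

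The main obstacle, and the part requiring the most care, is precisely this disconnected case: one must control the number of pieces and the dimensions when $v$ is close to only one component, so that the growing blob must already absorb several dimensions before reaching the others, and in particular rule out the borderline situation $t=2$ with both $\dim C_i$ near $n-3$ and both $|S_i|$ maximal, where the naïve count yields $2p(H_{n-3})+1$. Here I would invoke minimality once more: if the two components fixed disjoint sets of coordinates, then $\mathcal H(V(C_1)\cup V(C_2))$ would already equal $V(G)$ (they would even intersect), making $v$ superfluous; so $C_1$ and $C_2$ must share fixed coordinates, and analysing how $v$ must "unfix" the common ones should either contradict minimality of $S$ or force $\dim\mathcal H(V(C_1)\cup\{v\})\le n-3$ (absorbing the extra $+1$ into that block's dimension budget) or force one $S_i$ to be non‑maximum in $C_i$ — closing the off‑by‑one gap. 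It may also be cleaner to choose from the outset the vertex $v\in S$ whose removal yields the component decomposition with the smallest possible largest‑component dimension, so as to avoid these extremal configurations entirely.
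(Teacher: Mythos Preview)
Your handling of the case where $\mathcal H(S\setminus\{v\})$ is connected for \emph{some} $v\in S$ is fine and matches the paper's argument. The genuine gap is the disconnected case, and you have correctly located it: the off-by-one $|S|\le 1+2p(H_{n-3})$ really can arise with your decomposition, and none of your proposed repairs actually closes it. Specifically, your two-block partition via the chain $D_0\subset D_1\subset\cdots\subset D_t$ fails because the ``suffix'' $S_{j+1}\cup\cdots\cup S_t$ has hull $C_{j+1}+\cdots+C_t$, which is \emph{disconnected} when more than one component remains (the $C_i$'s are pairwise at distance $\ge 3$), so it is not a minimal hull set of a single Hamming subgraph and you cannot bound its size by $p(H_{n-3})$. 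Your $t=2$ coordinate analysis shows only that $C_1$ and $C_2$ share at least three fixed coordinates, which does not by itself rule out $\dim C_1=\dim C_2=n-3$ with both $|S_i|$ maximal; and the ``choose $v$ cleverly'' idea is not fleshed out enough to see that it helps.

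The paper avoids the problem by choosing, instead of $S\setminus\{v\}$, a subset $T\subsetneq S$ of \emph{maximum cardinality} subject to $G[\mathcal H(T)]$ being connected and $\dim\mathcal H(T)<n$. Then: if $\dim\mathcal H(T)\ge n-2$ one argues $|S\setminus T|\le 1$ and gets $|S|\le p(H_{n-1})+1$; if $\dim\mathcal H(T)\le n-3$, maximality of $T$ forces $G[\mathcal H(S\setminus T)]$ to be connected as well (otherwise one of its components, together with the corresponding vertices of $S\setminus T$, could be adjoined to $T$), so $S\setminus T$ is another subset of $S$ with connected hull of dimension $<n$, hence $|S\setminus T|\le|T|\le p(H_{n-3})$ by maximality, yielding $|S|\le 2p(H_{n-3})$ with no extra ``$+1$''. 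The missing idea in your argument is exactly this: take the extremal connected-hull piece rather than removing a single vertex, so that the complement is automatically connected and no larger than the piece itself.
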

\begin{proof}
	Let $S$ be a minimal hull set of $G$ with the maximum number of vertices, and let $T \subsetneq S$ be a subset of maximum cardinality such that $G[\mathcal{H}(T)]$ is connected and $dim(\mathcal{H}(T))< n$. 	
	Assume that $dim(\mathcal{H}(T))=n - 1$. By Lemma~\ref{lem dist Mario}, since $T$ is a minimum hull set, it follows that $S\setminus T = \{v\}$. Therefore, $p(H_n(r_1,\dots,\, r_n))\le p(H_{n - 1})$.
	Now assume that $\textrm{dim}(\mathcal{H}(T)) = n - 2$.  Let $v\in S\setminus T$. By Lemma~\ref{lem dist Mario}, we have $d(v,\mathcal{H}(T)) = 2$. Otherwise, $G[\mathcal{H}(T\cup\{v\})]$ would be a Hamming subgraph of dimension $n - 1$, contradicting the maximality of $T$. Thus, $S = T\cup\{v\}$. 
	Therefore, by Remark~\ref{rem:remark 2..2}, $p(H_n)\le p(H_{n-2}) + 1\le p(H_{n - 1}) + 1$.
	Finally, assume that $\textrm{dim}(\mathcal{H}(T)) \leq n-3$. By Remark~\ref{rem:remark 2..2}, we have $\vert T \vert \leq p(H_{n-3})$.
	
	Suppose, towards a contradiction, that $G[\mathcal{H}(S \setminus T)]$ is disconnected, and $G[\mathcal{H}(S \setminus T)]= C_1 + \cdots + C_k$ where the $C_i$'s are Hamming subgraphs of $H_n$. By Lemma~\ref{lem dist Mario},  there exists $i \in \{1,\,\ldots,\,k\}$ such that $d(C_i, \mathcal{H}(T))\leq 2$. 
	
	Thus, $G[\mathcal{H}(T \cup C_i)]$ is connected, and $\textrm{dim}(G[\mathcal{H}(T)]) < \textrm{dim}(G[\mathcal{H}(T \cup C_i)])< n$, contradicting the maximality of $T$. Hence, $G[\mathcal{H}(S \setminus T)]$ is connected. 
	
	Since $T$ is of maximum cardinality under the given conditions, we have  $\vert S \vert = \vert S \setminus T \vert + \vert T \vert \leq p(H_{n-3})+ p(H_{n-3})$. Therefore, $p(H_n(r_1,\dots,\, r_n)) \leq p(H_{n-3})+ p(H_{n-3})=2p(H_{n-3})$.
\end{proof}
\begin{coro}\label{cor: caratheodory number for small haming graphs}
	Let $H_n$  with $ 1\leq n\le 6$. Then, $c(H_n) = n$.
\end{coro}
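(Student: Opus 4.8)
The plan is to get the lower bound from Remark~\ref{rem:remark 2.1} and the upper bound from the inequality $c(H_n)\le p(H_n)$ of Remark~\ref{rem:remark 2.2}, so that the whole problem reduces to bounding $p(H_n)$ from above for $n\le 6$. For $n\le 3$ the statement is already contained in Remark~\ref{rem:remark 2.1}, so I would focus on $4\le n\le 6$: there Remark~\ref{rem:remark 2.1} gives $c(H_n)\ge n$ and Remark~\ref{rem:remark 2.2} gives $c(H_n)\le p(H_n)$, hence it suffices to prove $p(H_n)\le n$, which then sandwiches $c(H_n)$ to equal $n$.

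Feeding Lemma~\ref{lemma: p(h_n)} requires the three base values $p(H_1)$, $p(H_2)$, $p(H_3)$, which are not provided earlier and must be established directly. For $H_1=K_r$: no single vertex is a hull set (for $r\ge 2$) and every pair of vertices is, so every minimal hull set has exactly two elements and $p(H_1)=2$. For $H_2$: by Lemma~\ref{lem dist Mario} the hull of any two vertices at distance $2$ is a Hamming subgraph of dimension $2$, i.e.\ all of $H_2$; hence a minimal hull set of size $\ge 3$ would have all pairs at distance $\le 1$ and therefore lie inside a maximal clique of $H_2$ (a ``row'' or a ``column''), which is a proper $P_3$-convex set --- a contradiction --- so $p(H_2)=2$. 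The value $p(H_3)$ is the delicate one; the bound $p(H_3)\ge c(H_3)=3$ is clear, and for $p(H_3)\le 3$ I would replay the argument of Lemma~\ref{lemma: p(h_n)} in dimension $3$: given a minimal hull set $S$ of $H_3$, pick $T\subsetneq S$ of maximum cardinality with $G[\mathcal{H}(T)]$ connected and $\dim(\mathcal{H}(T))<3$, and analyse the three cases $\dim(\mathcal{H}(T))\in\{0,1,2\}$. Using that every vertex lying outside a Hamming subgraph of $H_3$ of dimension $2$ (resp.\ $1$) is at distance $1$ (resp.\ at most $2$) from it, that the connected components of a convex set are pairwise at distance $\ge 3$, and the already proved $p(H_1)=p(H_2)=2$, each case forces $|S|\le 3$; thus $p(H_3)=3$.

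Granting $p(H_1)=p(H_2)=2$ and $p(H_3)=3$, Lemma~\ref{lemma: p(h_n)} yields $p(H_4)\le\max\{2p(H_1),\,p(H_3)+1\}=\max\{4,4\}=4$, then $p(H_5)\le\max\{2p(H_2),\,p(H_4)+1\}=\max\{4,5\}=5$, and finally $p(H_6)\le\max\{2p(H_3),\,p(H_5)+1\}=\max\{6,6\}=6$. Hence $p(H_n)\le n$ for $4\le n\le 6$, and combining with $c(H_n)\ge n$ (Remark~\ref{rem:remark 2.1}) and $c(H_n)\le p(H_n)$ (Remark~\ref{rem:remark 2.2}) gives $c(H_n)=n$ for those $n$; together with the cases $n\le 3$ handled by Remark~\ref{rem:remark 2.1}, the corollary follows.

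The main obstacle is the base case $p(H_3)\le 3$: since Lemma~\ref{lemma: p(h_n)} is stated only for $n\ge 4$, this value is not available for free and has to be proved by hand, essentially by specialising the proof of that lemma to the smallest relevant dimension; once it is in place, the remaining steps are a short induction driven by the recursion.
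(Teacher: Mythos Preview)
Your proposal is correct and follows essentially the same route as the paper: obtain the sandwich $n\le c(H_n)\le p(H_n)$ from Remarks~\ref{rem:remark 2.1} and~\ref{rem:remark 2.2}, establish the base values $p(H_1)=p(H_2)=2$ and $p(H_3)=3$ directly, and then feed these into the recursion of Lemma~\ref{lemma: p(h_n)} to get $p(H_n)\le n$ for $4\le n\le 6$. The paper's own proof is terser (it dispatches $p(H_3)=3$ with the phrase ``a direct case analysis'' rather than spelling out the specialisation of Lemma~\ref{lemma: p(h_n)} that you outline), but the logical skeleton is identical.
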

\begin{proof}
	By Remarks \ref{rem:remark 2.1} and~\ref{rem:remark 2.2}, $n\leq c(H_n) \leq p(H_n)$.
	By Remark \ref{rem:remark 2.1} and a direct case analysis, it can ve proved that $p(H_3) = c(H_3) = 3$. 
	
	Also recall that $p(H_1) = p(H_2) = 2 = c(H_1) = c(H_2)$. 
	
	Therefore, the result follows from Lemma~\ref{lemma: p(h_n)}.
\end{proof}
The following lemma provides an upper bound for the Carath\'eodory number of Hamming graphs of dimension greater than or equal to $7$. These bounds will used in the next section to prove the main result of this article.
\begin{lem} \label{lem: qn} 
	Let $H_n$ be the Hamming graph of dimension $n \geq 7$. Then, the following upper bounds hold:
		\begin{enumerate}
			\item If $n\equiv 0 \;(\bmod \;{3})$, then $p(H_n) \leq 3\cdot2^{\frac{n}{3}-1}$.
			\item If $n\equiv 1 \;(\bmod \;{3})$, then $p(H_n) \leq 4 \cdot 2^{\frac{n-1}{3}-1}$.
			\item If $n\equiv 2 \;(\bmod \;{3})$, then $p(H_n) \leq 5\cdot2^{\frac{n-2}{3}-1}$.
		\end{enumerate}
\end{lem}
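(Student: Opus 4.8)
The plan is to prove all three bounds at once by strong induction on $n$, using the recursive inequality of Lemma~\ref{lemma: p(h_n)}, namely $p(H_n)\le\max\{2p(H_{n-3}),\,p(H_{n-1})+1\}$ for $n\ge 4$, fed with the small‑dimension values already available: $p(H_1)=p(H_2)=2$, $p(H_3)=3$, and $p(H_4)\le 4$, $p(H_5)\le 5$, $p(H_6)\le 6$, all recorded in (the proof of) Corollary~\ref{cor: caratheodory number for small haming graphs}. It is convenient to abbreviate the claimed bound as $q(n)$, so that $q(n)=3\cdot2^{n/3-1}$, $q(n)=4\cdot2^{(n-1)/3-1}$, or $q(n)=5\cdot2^{(n-2)/3-1}$ according as $n\equiv 0,1,2\pmod 3$; the target becomes $p(H_n)\le q(n)$ for every $n\ge 7$.

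First I would handle the base cases $n\in\{7,8,9\}$ directly. Lemma~\ref{lemma: p(h_n)} together with the small values gives $p(H_7)\le\max\{2p(H_4),p(H_6)+1\}\le\max\{8,7\}=8=q(7)$, then $p(H_8)\le\max\{2p(H_5),p(H_7)+1\}\le\max\{10,9\}=10=q(8)$, and finally $p(H_9)\le\max\{2p(H_6),p(H_8)+1\}\le\max\{12,11\}=12=q(9)$.

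For the inductive step, fix $n\ge 10$ and assume $p(H_m)\le q(m)$ for all $7\le m<n$. Since $n-3\ge 7$ and $n-1\ge 9$, the hypothesis applies to both indices, so Lemma~\ref{lemma: p(h_n)} yields $p(H_n)\le\max\{2q(n-3),\,q(n-1)+1\}$, and it remains only to verify the two arithmetic facts $2q(n-3)=q(n)$ and $q(n-1)+1\le q(n)$ in each residue class of $n$ modulo $3$. The identity $2q(n-3)=q(n)$ holds because $n-3$ lies in the same class as $n$ and a one‑line computation gives $q(n-3)=q(n)/2$ in each class. For the inequality, in each class $q(n-1)$ and $q(n)$ have the same power of $2$, with leading constants differing by exactly $1$; for instance if $n\equiv 0\pmod 3$ then $q(n)=6\cdot2^{n/3-2}$ while $q(n-1)=5\cdot2^{n/3-2}$, so $q(n)-q(n-1)=2^{n/3-2}\ge 1$, and the classes $n\equiv 1$ and $n\equiv 2$ are entirely analogous (the relevant exponent is positive as soon as $n\ge 10$). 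Combining, $p(H_n)\le q(n)$, which closes the induction.

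I expect no genuine obstacle here: the argument is a direct two‑term induction on the recursion of Lemma~\ref{lemma: p(h_n)}. The only points that need care are (i) correctly importing the base values $p(H_4),p(H_5),p(H_6)$ from Corollary~\ref{cor: caratheodory number for small haming graphs}, and (ii) keeping the exponent bookkeeping consistent across the three congruence classes, so that the identity $2q(n-3)=q(n)$ and the inequality $q(n-1)+1\le q(n)$ are each checked in the class to which they belong. Everything else reduces to a single application of Lemma~\ref{lemma: p(h_n)}.
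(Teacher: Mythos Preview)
Your proof is correct and follows essentially the same route as the paper: both arguments feed the recursion $p(H_n)\le\max\{2p(H_{n-3}),\,p(H_{n-1})+1\}$ from Lemma~\ref{lemma: p(h_n)} with the small values $p(H_k)\le k$ for $k\le 6$ and then induct, the only cosmetic difference being that the paper introduces an auxiliary recursive sequence $q_n$ and shows it satisfies $q_n=2q_{n-3}$, whereas you work directly with the closed form $q(n)$ and verify $2q(n-3)=q(n)$ and $q(n-1)+1\le q(n)$ in each residue class.
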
 
\begin{proof}
	Let $\{q_k\}_{k\in\mathcal N}$ be the sequence defined as follows: set $q_1 = 2$,  $q_k = k$ for every $2\le k\le 6$, and for every $n\geq 7$ define $q_n=\max\{2q_{n-3},q_{n-1}+1\}$ . 
	
	By applying Lemma~\ref{lemma: p(h_n)} and using induction, we obtain $p(H_n) \le q_n$, for every $2\le n$.

	By direct computation, $q_7 = 8$, $q_8 =10$, and $q_9 = 12$. It is easy to prove by induction that, for every $n\geq 7$, $q_n=\max\{2q_{n-3},q_{n-1}+1\}=2q_{n-3}$ . Thus, $p(H_n)\le 2 q_{n - 3}$ for every $n\ge 7$. 
	
	Therefore, the results follows by induction on $n$.
\end{proof}
\section{Carath\'eodory number of a Hamming graphs obtained from complete graphs with at least three vertices}\label{sec: caratheodroy number}
In this section $H_n$ represents a Hamming graph with dimension at least $7$ and each complete of the product has at least three vertices.

In the previous section, we computed the Carth\'eodory number for Hamming graphs with dimension at most six (see Crollary~\ref{cor: caratheodory number for small haming graphs}). 

Now we are ready to prove our main result.
\begin{theorem}\label{thm: main result}
	Let $H_n$ be a Hamming graph of dimension $n$ with $n \geq 7$ and each complete of the product has at least three vertices.
	\begin{enumerate}
		\item If $n\equiv 0 \;(\bmod \;{3})$, then $c(H_n) = 3\cdot2^{\frac{n}{3}-1}$.
		\item If $n\equiv 1 \;(\bmod \;{3})$, then $c(H_n) = 4\cdot2^{\frac{n-1}{3}-1}$.
		\item If $n\equiv 2 \;(\bmod \;{3})$, then $c(H_n) = 5\cdot2^{\frac{n-2}{3}-1}$.
	\end{enumerate}
\end{theorem}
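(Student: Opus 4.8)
The plan is to split the equality into the two matching inequalities. The upper bound is essentially already available: by Remark~\ref{rem:remark 2.2} we have $c(H_n)\le p(H_n)$, and Lemma~\ref{lem: qn} bounds $p(H_n)$ by exactly $3\cdot 2^{n/3-1}$, $4\cdot 2^{(n-1)/3-1}$ or $5\cdot 2^{(n-2)/3-1}$ according to the residue of $n$ modulo $3$. So the whole burden falls on the matching lower bound: for each $n\ge 7$ one must exhibit a Carath\'eodory set of $H_n$ whose cardinality equals the claimed number. I would organize this as three parallel inductions on $n$ in steps of $3$, one for each residue class, with base cases $H_4$, $H_5$ and $H_6$, for which Corollary~\ref{cor: caratheodory number for small haming graphs} already provides Carath\'eodory sets of sizes $4$, $5$ and $6$; I would, moreover, phrase the induction so as to carry along the extra information that these sets can be chosen with $\mathcal H$-hull equal to $V(H_m)$ (as guaranteed in general by Corollary~\ref{cor: caratheodory set with hull of maximum dimension}) together with a distinguished vertex lying in $\partial\mathcal H(\cdot)$.

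The inductive step goes from $H_{n-3}$ to $H_n$. Writing $H_n=H_{n-3}\,\square\,K_{r_{n-2}}\,\square\,K_{r_{n-1}}\,\square\,K_{r_n}$ and starting from a Carath\'eodory set $S'$ of $H_{n-3}$ with $\mathcal H(S')=V(H_{n-3})$ and a designated boundary vertex, I would place two copies of $S'$ inside $H_n$: one inside the Hamming subgraph obtained by fixing the three new coordinates, and one inside a Hamming subgraph obtained by fixing three of the old coordinates. The key use of the hypothesis that every factor has at least three vertices is twofold: it supplies coordinate values $\ge 2$ that keep the two copies disjoint (so the cardinality genuinely doubles, matching $p(H_n)\le 2p(H_{n-3})$ from Lemma~\ref{lemma: p(h_n)}), and it lets one position the two dimension-$(n-3)$ Hamming subgraphs that the copies span so that, via Lemmas~\ref{lem dist Mario} and~\ref{lem2 dist Mario}, their common $\mathcal H$-hull is all of $V(H_n)$.

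The core of the argument is to verify that the resulting set $S$ is a Carath\'eodory set, i.e. that $\partial\mathcal H(S)\neq\emptyset$. I would fix a candidate vertex $p$, chosen to lie outside each of the two Hamming subgraphs spanned by the two copies, and check that $p\in\mathcal H(S)=V(H_n)$ (immediate once the hull is full) and that $p\notin\mathcal H(S\setminus\{u\})$ for every $u\in S$. For the latter, removing $u$ from whichever copy it belongs to shrinks that copy's span to a strictly smaller Hamming subgraph — precisely as described by the last two items of Lemma~\ref{lem dist Mario} and by Remark~\ref{rmk: replacing-a-coordinate} — and the substance is to show that the other, undamaged copy is by itself too "thin'' to regenerate $p$ jointly with the shrunken one. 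This is the step that forces the precise choice of which coordinates the two copies are embedded in and which values are used, and it is naturally organized according to $n\bmod 3$, since the whole shape of the extremal configuration is inherited from the corresponding base case $H_4$, $H_5$ or $H_6$.

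I expect this last point — the minimality/boundary bookkeeping — to be the main obstacle. The $\mathcal H$-hull of the union of two transverse Hamming subgraphs tends to be robust: shrinking one of them need not shrink the hull of the union, so a naive placement of the two copies yields a hull set that is not minimal, hence not a maximum Carath\'eodory set, and $p$ would survive the deletion of many vertices. The delicate part is therefore to arrange the two copies so that the deletion of \emph{every} single vertex of $S$ is detected at $p$; carrying this out is where the structural results of Section~\ref{caratheodory number} (Proposition~\ref{prop barbarosa}, Lemmas~\ref{lem dist Mario} and~\ref{lem2 dist Mario}, and Remark~\ref{rmk: replacing-a-coordinate}) are all brought to bear, together with the three-way case split on $n$ modulo $3$.
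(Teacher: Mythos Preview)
Your overall strategy matches the paper's: the upper bound comes from Remark~\ref{rem:remark 2.2} together with Lemma~\ref{lem: qn}, and the lower bound is obtained by an explicit recursive construction that doubles the set in steps of~$3$. You also correctly locate the crux of the argument in the verification that removing any single vertex destroys a fixed witness~$p$.

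Where your plan stops short of a proof is precisely at that crux, and the paper's construction shows why carrying a \emph{single} Carath\'eodory set $S'$ through the induction is not enough. The paper maintains \emph{two} families $S_w^{(n)}$ and $S_v^{(n)}$ in $H_{n-2}$ (each of half the target size), and the inductive invariant (Claim~\ref{claim: main result}) is strictly sharper than ``Carath\'eodory and hull-spanning'': after removing any single element of $S_w^{(n)}$ the remaining hull never takes the value~$2$ in the first coordinate, and symmetrically $S_v^{(n)}$ after a deletion never takes the value~$0$ in coordinate~$n-2$. This coordinate-avoidance is exactly what guarantees that, once the two families are embedded in $H_n$ by appending $(0,0)$ and $(1,1)$, deleting any one vertex pushes the damaged family's hull to distance~$\ge 3$ from the intact one, so by Lemma~\ref{lem2 dist Mario} they cannot merge and the witness lands in $\partial\mathcal H$. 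In your ``two copies of the same $S'$'' scheme---one copy fixing the three new coordinates, the other fixing three old coordinates---the two dimension-$(n-3)$ subgraphs share $n-6$ free coordinates, and a generic maximum Carath\'eodory set of $H_{n-3}$ carries no such coordinate-avoidance property; the undamaged copy will then typically reconnect with the damaged one at distance~$\le 2$, and your candidate $p$ survives the deletion. The paper circumvents this by hand-building explicit base configurations for $n\in\{7,8,9\}$ (rather than inheriting sets from $H_4,H_5,H_6$ via Corollary~\ref{cor: caratheodory number for small haming graphs}) that already satisfy the sharpened invariant, and in the recursive step it \emph{interleaves} $S_w^{(n-3)}$ with $S_v^{(n-3)}$ to form each of $S_w^{(n)}$ and $S_v^{(n)}$, rather than duplicating one set. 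That interplay between the two families is the missing ingredient in your outline.
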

\begin{proof}
	We will only provide the details for the case $n\equiv 0 \;(\bmod \;{3})$. The other two cases follow analogously, so we will leave their details to the readers and only sketch their proofs. 
	
	By Lemma \ref{lem: qn}, we have an upper bound for $c(H_n)$. Our goal is to construct a  Carath\'eodory in $H_n$ whose cardinality matches this upper bound every $n\geq 7$. 

	\vspace{0,5cm}
	\textbf{Case $n\equiv 0 \;(\bmod \;{3})$} 
\vspace{0,5cm}
	
	We recursively define the set of $3\cdot2^{\frac n 3 - 1}$ vertices in $H_{n - 2}$ as follows. Let $n = 3q$ with $q\ge 3$, Consider the following vertices in $H_7$: $w_1^{(9)}=(0,0,0,0,0,0,0)$, $w_2^{(9)}=(0,1,2,2,0,0,0)$, $w_3^{(9)}=(0,1,1,1,0,0,0)$, $w_4^{(9)}=(1,2,2,2,2,1,0)$, $w_5^{(9)}=(1,2,0,0,1,1,0)$, $w_6^{(9)}=(1,2,1,1,1,1,0)$, $v_1^{(9)}=(2,2,2,2,2,2,2)$, $v_2^{(9)}=(2,2,2,0,0,1,2)$, $v_3^{(9)}=(2,2,2,1,1,1,2)$, $v_4^{(9)}=(2,1,0,0,0,0,1)$, $v_5^{(9)}=(2,1,1,2,2,0,1)$, and $v_6^{(9)}=(2,1,1,1,1,0,1)$.

	For every $1 \leq i \leq 3\cdot 2^{q-3}$ and for every $n\ge 12$ such that $n\equiv 0 \;(\bmod \;{3})$ and $n = 3q$ with $q \geq 3$, define: $w_i^{(n)}=(0,w_i^{(n-3)},0,0)$, $w_{3\cdot 2^{q-3} + i}^{(n)}=(1,v_i^{(n-3)},1,0)$, $v_i^{(n)}=(2,w_i^{(n-3)},0,2)$, and $v_{3\cdot 2^{q-3} + i }^{(n)}=(2,2,v_i^{(n-3)},1)$.

	Finally, define the following sets:  $S_w^{(n)}=\lbrace w_i^{(n)}: 1\leq i \leq 3\cdot 2^{q-2} \rbrace$ and $S_v^{(n)}=\lbrace v_i^{(n)}: 1\leq i \leq 3\cdot 2^{q-2}\rbrace$.
	
	We claim the following:
	\begin{claim}\label{claim: main result}
		For every $n \geq 9$ such that $n \equiv 0 \pmod{3}$ and $n = 3q$ with $q \geq 3$, the following statements hold:
			\begin{enumerate}
			\item $S_w^{(n)}$ is a minimum hull set of $H_{n-3}\square 0$. Moreover, for every $1\leq j\leq 3\cdot 2^{q-2}$, if $x\in \mathcal{H}(S_{w}^{(n)}\setminus \lbrace w_j^{(n)}\rbrace)$, then $x_1\neq 2$ .
			\item $S_v^{(n)}$ is a minimal hull set of $2\square H_{n-3}$. Moreover, for every $1\leq j\leq 3\cdot 2^{q-2}$, if $x\in \mathcal{H}(S_v^{(n)}\setminus \lbrace v_j^{(n)}\rbrace)$, then $x_{n-2} \neq 0$. 
		\end{enumerate}
	\end{claim}
Using case analysis together with Lemmas~\ref{lem dist Mario} and~\ref{lem2 dist Mario}, it is straightforward to verify the claim holds for $n = 9$. 

Assume by inductive hypothesis that Claim~\ref{claim: main result} holds for every $n = 3k$ with $3\le k< q$. Let $n = 3q$ with $q\ge 4$. 
 
On the one hand, by inductive hypothesis, we have $\mathcal{H}\left(S_{w}^{(n)}\right)=\mathcal{H}(S_1\cup S_2)$, where $\mathcal{H}(S_1)\cong 0\square H_{n - 6}\square 0\square 0\square 0$, $\mathcal{H}(S_2)\cong 1\square 2\square H_{n - 6}\square 1\square 0$. Moreover, the following properties hold: 
\begin{itemize}
	\item For every $1 \leq i \leq 3\cdot 2^{q-3}$, if $x\in \mathcal{H}(S_1\setminus\{w_i^{(n)}\})$ then $x_2\neq 2$. In addition,  $d(\mathcal{H}(S_1\setminus\{w_i^{(n)}\}),\mathcal{H}(S_2))\ge 3$.
	\item For every $3\cdot 2^{q-3} + 1 \leq i \leq 3\cdot 2^{q-2}$, if $y\in \mathcal{H}(S_2)\setminus\{w_i^{(n)}\})$, then $y_{n-4}\neq 0$. In addition, $d(\mathcal{H}(S_2\setminus\{v_i^{(n)}\}),\mathcal{H}(S_1))\ge 3$, .
\end{itemize}
Hence, by Lemma~\ref{lem2 dist Mario}, $S_w^{(n)}$ is a minimum hull set of $H_{n-3}\square 0$ such that, for every $1\le i\le 3\cdot 2^{q-2}$, $x\in (S_w^{(n)}\setminus\{w_i^{(n)}\})$  implies $x_1\neq 2$.

On the other hand, by inductive hypothesis, we have $\mathcal{H}\left(S_{v}^{(n)}\right)=\mathcal{H}(T_1\cup T_2)$, where $\mathcal{H}(T_1)\cong 2\square H_{n - 6}\square 0\square 0\square 2$, $\mathcal{H}(T_2)\cong 2\square 2\square 2\square H_{n - 6}\square  1$. Moreover, the following properties hold: 
\begin{itemize}
	\item For every $1 \leq i \leq 3\cdot 2^{q-3}$, if $x\in \mathcal{H}(T_1\setminus\{v_i^{(n)}\})$, then $x_2\neq 2$. In addition, $d(\mathcal{H}(T_1\setminus\{v_i^{(n)}\}),\mathcal{H}(T_2))\ge 3$.
	\item For every $3\cdot 2^{q-3} + 1 \leq i \leq 3\cdot 2^{q-2}$, $y\in \mathcal{H}(T_2\setminus\{v_i^{(n)}\})$, then $y_{n-3}\neq 0$. In addition,  $d(\mathcal{H}(T_2\setminus\{v_i^{(n)}\}),\mathcal{H}(T_1))\ge 3$.
\end{itemize}
Hence, by Lemma~\ref{lem2 dist Mario}, $S_v^{(n)}$ is a minimum hull set of $2\square H_{n-3}$ such that, for every $1\le i\le 3\cdot 2^{q-2}$, $x\in (S_v^{(n)}\setminus\{v_i^{(n)}\})$  implies $x_{n - 2}\neq 0$. Therefore, Claim~\ref{claim: main result} holds.

Let $U_1^{(n)}=\{(u_1,0,0):\, u_1\in S_w^{(n)}\}$ and  $U_2^{(n)}=\{(u_2,1,1):\, u_2\in S_v^{(n)}\}$. 

Furthermore, by Claim~\ref{claim: main result} and Lemma~\ref{lem2 dist Mario}, the set $U = U_1^{(n)}\cup U_2^{(n)}$ is a hull set of $H_n$. By Claim~\ref{claim: main result}, the following properties hold: $d(\mathcal{H}(U_1^{(n)}\setminus\{u\}),\mathcal{H}(U_2^{(n)}))\ge 3$ for every $u\in U_1^{(n)}$, and $d(\mathcal{H}(U_2^{(n)}\setminus\{u\}),\mathcal{H}(U_1^{(n)}))\ge 3$ for every $u\in U_2^{(n)}$. Moreover, by construction, for every $u\in U$, $x\notin\mathcal{H}(U\setminus\{u\})$  where $x_i = 2$ for each $i\in\{n-1,n\}$. 

Therefore, $U$ is a Charath\'eodory set of $H_n$ with $|U| = 3\cdot 2^{q - 1}$. By Lemma~\ref{lemma: p(h_n)}, it follows that $\textrm{c}(H_n)= 3\cdot 2^{\frac n 3 - 1}$.

\vspace{0,5cm}
\textbf{Case $n\equiv 1 \;(\bmod \;{3})$}
\vspace{0,5cm}

We recursively define the set of $4\cdot 2^{\frac {n-1} 3 - 1}$ vertices in $H_{n - 2}$ as follows. Let $n = 3 q + 1$ with $q\ge 2$. Consider the following vertices in $H_5$:\\
$w_1^{(7)}=(0,0,0,0,0)$, $w_2^{(7)}=(0,1,0,0,0)$, $w_3^{(7)}=(1,2,1,1,0)$,\\
$w_4^{(7)}=(1,2,2,1,0)$, $v_1^{(7)}=(2,2,2,2,2)$, $v_2^{(7)}=(2,2,2,1,2)$,\\
$v_3^{(7)}=(2,1,1,0,1)$, and $v_4^{(7)}=(2,1,0,0,1)$.

For every $1 \leq i \leq 4\cdot 2^{q-3}$ and for every $n\ge 10$ such that $n\equiv 1 \;(\bmod \;{3})$ and $n = 3q + 1$ with $q\ge 3$, $w_i^{(n)}=(0,w_i^{(n-3)},0,0)$, $w_{4\cdot 2^{q-3}+i}^{(n)}=(1,v_i^{(n-3)},1,0)$, $v_i^{(n)}=(2,v_i^{(n-3)},2,2)$, and $v_{4\cdot 2^{q-3}+i}^{(n)}=(2,w_i^{(n-3)},1,1)$.

Finally, define the following sets:  $S_w^{(n)}=\lbrace w_i^{(n)}: 1\leq i \le 4\cdot 2^{q-2} \rbrace$ and $S_v^{(n)}=\lbrace v_i^{(n)}: 1\leq i \leq  4\cdot 2^{q-2}\rbrace$.

Let $U_1^{(n)}=\{(u_1,0,0):\, u_1\in S_w^{(n)}\}$ and $U_2^{(n)}=\{(u_2,1,1):\, u_2\in S_v^{(n)}\}$. It can be proved, similarly to the case $n\equiv 0 \;(\bmod \;{3})$, that $U = U_1^{(n)}\cup U_2^{(n)}$ is a Carth\'eodory set with $|U| = 4\cdot 2^{q-1}$.  

\vspace{0,5cm}
\textbf{Case $n\equiv 2 \;(\bmod \;{3})$}
\vspace{0,5cm}

We recursively define the set of $5\cdot2^{\frac {n-2} 3 - 1}$ vertices in $H_{n - 2}$ as follows. Let $n = 3 q + 2$ with $q\ge 2$. Consider the following vertices in $H_6$:\\ 
$w_1^{(8)} = (1,2,2,2,2,0)$, $w_2^{(8)}=(0,0,0,0,0,0)$, $w_3^{(8)} = (0,0,1,0,0,0)$,\\ 
$w_4^{(8)} = (0,1,2,1,1,0)$, $w_5^{(8)} = (0,1,2,2,1,0)$,\\
$v_1^{(8)} = (2,2, 2, 2, 2, 1)$, $v_2^{(8)} = (2, 2, 2, 1, 2, 1)$, $v_3^{(8)} = (2, 1, 0, 0, 1, 1)$,\\
$v_4^{(8)} = (2, 1, 0,0, 1, 1)$, and $v_5^{(8)} = (2, 0, 0, 0, 0 , 2)$.

For every $1 \leq i \leq 5 \cdot 2^{q-3}$ and for every $n\ge 11$ such that $n\equiv 2 \;(\bmod \;{3})$ and $n = 3q + 2$, $w_i^{(n)} = (0,w_i^{(n-3)},0,0)$, $w_{5\cdot 2^{q-3} + i}^{(n)}= (1,v_i^{(n-3)},1,0)$, $v_i^{(n)} = (2,w_i^{(n-3)},1,1)$, and $v_{5 \cdot 2^{q-3} + i }^{(n)}=(2,v_i^{(n-3)},2,2)$.

Finally, define the following sets:  $S_w^{(n)}=\lbrace w_i^{(n)}: 1\leq i \leq 5\cdot 2^{q-2} \rbrace$ and $S_v^{(n)}=\lbrace v_i^{(n)}: 1\leq i \leq 5\cdot 2^{q-2}\rbrace$.

Let $U_1^{(n)}=\{(u_1,0,0):\, u_1\in S_w^{(n)}\}$ and $U_2^{(n)}=\{(u_2,1,1):\, u_2\in S_v^{(n)}\}$. It can be proved, similarly to the case $n\equiv 0 \;(\bmod \;{3})$, that $U = U_1^{(n)}\cup U_2^{(n)}$ is a Carath\'edory set with $|U| = 5\cdot 2^{q-1}$. 
\end{proof}
\section{Conclusions and open questions}\label{sec: open questions}
In this article, we provide a closed formula for the Carathéodory number of Hamming graphs defined as the Cartesian product of complete graphs with at least three vertices (see Theorem~\ref{thm: main result}). To the best of our knowledge, the only existing closed formula for Hamming graphs related to $P_3$-convexity is the hull number formula given by Bres{\v a}r and Valencia-Pabon in~\cite{Mario}. 

Additionally, an upper bound for the Carathéodory number of any Hamming graph can be obtained by combining Remark~\ref{rem:remark 2.2} with Lemma~\ref{lem: qn}. 

We conjecture that for every $n \geq 7$, if the Hamming graph $H_n$ includes at least one factor $K_2$ in its Cartesian product, then $c(H_n) < p(H_n)$. Improving the general upper bound in this case and providing a closed formula remain open problems.
%


\end{document}